\documentclass{amsart}
\usepackage{amssymb
,amsthm
,amsmath
,amscd
,mathtools
}

\usepackage[pagewise]{lineno}

\usepackage{enumerate}
\usepackage{hyperref}
\usepackage[all]{xy}
\usepackage[top=30truemm,bottom=30truemm,left=25truemm,right=25truemm]{geometry}

\newcommand{\Sp}{\mathrm{Sp}}
\newcommand{\SL}{\mathrm{SL}}
\newcommand{\GL}{\mathrm{GL}}

\newcommand{\Oo}{\mathrm{O}}
\newcommand{\SO}{\mathrm{SO}}
\newcommand{\GSp}{\mathrm{GSp}}
\newcommand{\GSO}{\mathrm{GSO}}
\newcommand{\PGL}{\mathrm{PGL}}
\newcommand{\PGSp}{\mathrm{PGSp}}
\newcommand{\GO}{\mathrm{GO}}

\newcommand{\Hom}{\mathrm{Hom}}
\newcommand{\Gal}{\mathrm{Gal}}
\newcommand{\Ext}{\mathrm{Ext}}

\newtheorem{thm}{Theorem}[section]

\newtheorem{lem}[thm]{Lemma}
\newtheorem{prop}[thm]{Proposition}

\theoremstyle{remark}
\newtheorem{rem}[thm]{Remark}
\newtheorem{ex}[thm]{Example}

\theoremstyle{definition}
\newtheorem{defn}[thm]{Definition}

\numberwithin{equation}{section}

\makeatletter
\def\iddots{\mathinner{\mkern1mu\raise\p@
	\hbox{.}\mkern2mu\raise4\p@\hbox{.}\mkern2mu
	\raise7\p@\vbox{\kern7\p@\hbox{.}}\mkern1mu}}
\def\adots{\mathinner{\mkern2mu\raise\p@\hbox{.}
 \mkern2mu\raise4\p@\hbox{.}\mkern1mu
 \raise7\p@\vbox{\kern7\p@\hbox{.}}\mkern1mu}}
\makeatother

\allowdisplaybreaks
\title{Theta Correspondence and the Prasad conjecture for SL(2)}
\author{Hengfei LU}
\address{School of Mathematics,	Tata Institute of Fundamental Research, Dr. Homi Bhabha Road,	Colaba, Mumbai 400005, INDIA}
\email{hengfei@math.tifr.res.in}
\setcounter{tocdepth}{1}

\begin{document}
\maketitle

\begin{abstract}
We use relations between the base change representations and theta lifts, to give a new proof to the local period problems of $\SL(2)$ over a  nonarchimedean quadratic field  extension $E/F$. Then we will verify the Prasad conjecture for $\SL(2)$.  With a similar strategy, we obtain a certain result for the Prasad conjecture for $\Sp(4)$.
\end{abstract}
	\subsection*{Keywords} theta lifts, periods, base change, Prasad's conjecture 
\subsection*{MSC(2000)} 11F27$\cdot$11F70$\cdot$22E50
\tableofcontents
\section{Introduction}
Assume that $F$ is a nonarchimedean local field with characteristic $0$.
Let $G$ be a connected reductive group defined over $F$ and $H$
be a closed subgroup of $G$. Given a smooth irreducible representation $\pi$ of $G(F),$ one may consider  the complex vector space $\Hom_{H(F)}(\pi,\mathbb{C}).$ If it is nonzero, then we say that $\pi$ is  $H(F)$-distinguished, or has a nonzero $H(F)$-period. 
\par
Period problems,
which are closely related to Harmonic Analysis, have been extensively studied for classical groups. 
 The most general situations have been studied  in  \cite{sakellaridis2012periods} when $G$ is split.
 Given a spherical variety $X=H\backslash G$, Sakellaridis and Venkatesh \cite{sakellaridis2012periods}  introduce a certain complex reductive group $\hat{G}_X$ associated with the variety $X$, to deal with  the spectral decomposition of $L^2(H\backslash G)$ under the assumption that $G$ is split.
 In a similar way, Prasad \cite[\S9]{prasad2015arelative} introduces a certain quasi-split reductive group $G^{op}$ to deal with the  period problem when the subgroup $H$ is the Galois fixed points of $G$, i.e. $H=G^{\Gal(E/F)}$, where $E$ is a quadratic field extension of $F$.  In this paper,
  we will mainly focus on the case $G=R_{E/F}\SL_2$ and $H=\SL_2$, where $R_{E/F}$ denotes the Weil restriction of scalars, i.e. the Prasad conjecture \cite[Conjecture 2]{prasad2015arelative} for $\SL_2$.
  
\par
Let $W_F$ (resp. $W_E$) be the Weil group of $F$ (resp. $E$) and  $WD_F$ (resp. $WD_E$) be the Weil-Deligne group. Let $\psi$ be any additive character of $F$ and $\psi_E=\psi\circ tr_{E/F}.$
Assume that $\tau$ is an irreducible smooth representation of $\SL_2(F),$
 with a Langlands parameter 
$\phi_\tau:WD_F\rightarrow \PGL_2(\mathbb{C})$ and a character $\lambda$ of the component group $S_{\phi_\tau }=C(\phi_\tau)/C^\circ(\phi_\tau),$ where $C(\phi_{\tau})$ is the centralizer of $\phi_{\tau}$ in $\PGL_2(\mathbb{C})$ and $C^\circ(\phi_{\tau})$ is the connected component of $C(\phi).$
 Then $\phi_\tau|_{WD_E}$
gives a Langlands parameter of $\SL_2(E)$. The map $\phi_{\tau}\rightarrow \phi_{\tau}|_{WD_E}$ is called the base change map. Prasad's conjecture for $\SL(2)$ predicts the following result, which was shown in \cite{anandavardhanan2003distinguished}.
\begin{thm}\label{localmain}
Let  $E$ be a quadratic field extension of a nonarchimedean local field $F$ with associated 
 Galois group $\Gal(E/F)=\{1,\sigma \}$ and  associated quadratic character $\omega_{E/F}$ of $F^\times.$
Assume that $\tau$ is an irreducible smooth admissible representation of $\SL_2(E)$ with  central character $\omega_{\tau}$ satisfying $\omega_\tau(-1)=1$.
Then the following  are equivalent:
\begin{enumerate}[(i)]
\item  $\tau$ is  $\SL_2(F)$-distinguished;
\item  $\phi_\tau=\phi_{\tau'}|_{WD_E}$ for some irreducible representation $\tau'$ of $\SL_2(F)$ and
$\tau$ has a Whittaker model with respect to a non-trivial  additive character of $E$ which is trivial on $F$. 
\end{enumerate}
\end{thm}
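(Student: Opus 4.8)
The plan is to convert the $\SL_2(F)$-period into a more tractable period through a seesaw dual pair, and then to read off both the base-change condition and the Whittaker condition from the theta correspondence together with the known distinction theory for $\GL_2$. First I would record the cheap necessary condition that motivates the central-character hypothesis: since $-I\in\SL_2(F)$ is central in $\SL_2(E)$, any nonzero $\SL_2(F)$-invariant functional on $\tau$ forces $\omega_\tau(-I)=1$, so the assumption $\omega_\tau(-1)=1$ is harmless and is in force throughout. To extract the parameter, I would embed $\tau$ as a constituent of $\pi|_{\SL_2(E)}$ for an irreducible $\pi$ of $\GL_2(E)$ and relate $\SL_2(F)$-distinction of $\tau$ to $\GL_2(F)$-distinction of a suitable twist of $\pi$, via Mackey theory for $\SL_2(E)\subset\GL_2(E)$ and the action of $E^\times/(E^\times)^2$ permuting the $L$-packet. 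By the Flicker--Hakim theory, $\GL_2(F)$-distinction forces $\pi^\sigma\cong\pi$, i.e.\ Galois-invariance of the parameter; hence $\phi_\tau$ extends across $WD_E\triangleleft WD_F$ and $\phi_\tau=\phi_{\tau'}|_{WD_E}$ for some irreducible $\tau'$ of $\SL_2(F)$. This yields the parameter half of (ii) in the direction (i)$\Rightarrow$(ii).

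To handle the period itself, and to decide which member of the packet is distinguished, I would use the seesaw
\[
\begin{array}{ccc}
\SL_2(E) & & \Oo(W)(E)\\
\cup & & \cap\\
\SL_2(F) & & \Oo(R_{E/F}W)(F)
\end{array}
\]
where $W$ is a quadratic space over $E$ and $R_{E/F}W$ denotes $W$ viewed over $F$ with form $\mathrm{tr}_{E/F}\circ q_W$, so that the two rows are Howe dual pairs inside a common symplectic group over $F$. Writing $\tau=\Theta_{\psi_E,W}(\sigma)$ as a theta lift from $\Oo(W)(E)$, the seesaw identity identifies $\Hom_{\SL_2(F)}(\tau,\mathbb{C})$ with $\Hom_{\Oo(W)(E)}\bigl(\Theta_{\psi,R_{E/F}W}(\mathbf{1}_{\SL_2(F)})|_{\Oo(W)(E)},\ \sigma\bigr)$. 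The right-hand lift of the trivial representation of $\SL_2(F)$ is explicitly computable in the relevant Witt tower, and its $\Oo(W)(E)$-restriction pairs nontrivially with $\sigma$ exactly when $\sigma$ — equivalently $\phi_\tau$, through the parameter-compatibility of theta — is a base change. This recovers the equivalence of (i) with the base-change condition and, in the direction (ii)$\Rightarrow$(i), actually produces the invariant functional, once one verifies that the theta lifts entering the seesaw are nonzero so that the argument is not vacuous.

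The main obstacle is the Whittaker clause of (ii): base change alone does not single out a member of the $\SL_2(E)$-packet, so I must match the existence of the $\SL_2(F)$-period with genericity of $\tau$ for a character $\psi'$ of $E$ that is trivial on $F$. Such characters are precisely $x\mapsto\psi_E(\delta x)$ with $\mathrm{tr}_{E/F}(\delta)=0$, and the point is that the lift $\Theta_{\psi,R_{E/F}W}(\mathbf{1})$ produced by the seesaw carries exactly the Whittaker normalization attached to these $\psi'$. Combining the compatibility of theta lifts with Whittaker models (Rallis, Kudla) with the analysis of the component group $S_{\phi_\tau}$ and its character $\lambda$, I expect to identify the distinguished member of the packet with the $\psi'$-generic one. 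Carrying out this bookkeeping — tracking additive characters through restriction of scalars and through the correspondence, and reconciling it with the $\GL_2$-descent — is the delicate part; the dihedral members (arising from $\Oo_2$) and the remaining members of the packet may have to be treated in parallel, and throughout one must keep control of the nonvanishing of the theta lifts so that each step of the seesaw remains effective.
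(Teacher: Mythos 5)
Your outline has the right skeleton for the tempered cases (the see-saw you draw is exactly the one the paper uses, with $\Hom_{\SL_2(F)}(\tau,\mathbb{C})\cong\Hom_{\Oo(V)(E)}(\Theta_\psi(\mathbf{1})|_{\Oo(V)(E)},\sigma)$), but it has three genuine gaps, and the most serious one sits precisely where you wave your hands. The Whittaker clause of (ii) is not ``bookkeeping'': it is the part of the theorem that selects which members of the $L$-packet are distinguished, and the mechanism you propose for it is wrong. The theta lift $\Theta_{\psi,R_{E/F}W}(\mathbf{1})$ of the \emph{trivial} representation is not generic and carries no Whittaker normalization whatsoever — for $\mathrm{disc}(V')=E$ it is literally the character $\mathbf{1}\boxtimes\omega_{E/F}$ of $\SO(3,1)$ — so it cannot ``carry exactly the Whittaker normalization attached to $\psi'$.'' What actually decides distinction inside the packet is the computation of $\Hom_{\Oo(V_K)}(\Theta_\psi(\mathbf{1}),\mu_1^{\pm})$, resp.\ the open-orbit analysis of $\mathcal{I}(\tfrac12)|_{\Oo(V_K)}$, which yields concrete conditions on $\mu$ (e.g.\ that $\mu|_{E'^\times}$ factors through $N_{E'/F}$ for some quadratic $E'\neq E$ inside $K$); the identification of the distinguished members with the $\psi'$-generic ones is then a separate matching of towers, additive characters and component-group characters that your outline never supplies.

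Two further gaps. First, Flicker--Hakim gives that $\GL_2(F)$-distinction forces $\pi^\sigma\cong\pi^\vee$, not $\pi^\sigma\cong\pi$; projectively these coincide, so $\phi_\tau$ is indeed $\Gal(E/F)$-invariant, but your next step — ``hence $\phi_\tau$ extends across $WD_E\triangleleft WD_F$'' — is not formal. Galois-invariance of a $\PGL_2(\mathbb{C})$-valued parameter yields an extension only after one kills the centralizer obstruction: when $\phi_\tau$ is irreducible the element $A$ conjugating $\phi_\tau$ to $\phi_\tau^\sigma$ is unique and $A^2=\phi_\tau(\ell^2)$ comes for free (this is exactly how the paper treats the $p=2$ non-dihedral supercuspidals), but for dihedral and principal-series parameters the centralizer is nontrivial and the lifts must be constructed by hand, which is the entire content of \S4 of the paper — and it is in that construction that the equivalence with the Whittaker clause is actually established. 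Second, your plan to run the see-saw uniformly by writing every $\tau$ as a theta lift from $\Oo(W)(E)$ breaks for non-tempered irreducible principal series: there $W$ is split, the double coset space $P\backslash\Oo_{2,2}(F)/\Oo_{1,1}(E)$ has a \emph{closed} orbit, and non-tempered representations can occur on the boundary of $\mathcal{I}(s)$, so the open-orbit identity you rely on fails; this is precisely why the paper abandons theta in that case and instead uses Mackey theory for $B(E)\backslash\SL_2(E)/\SL_2(F)$, which produces the exact sequence pinning down $\Hom_{\SL_2(F)}(I(z,\chi),\mathbb{C})$. Without repairs at these three points the argument does not close; note also that your $\GL_2$-descent for the parameter half is the Anandavardhanan--Prasad route that the paper deliberately replaces, so even when repaired your proof would be a hybrid of the old argument and the new one rather than an independent alternative.
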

\par
Anandavardhanan and Prasad \cite{anandavardhanan2003distinguished}  deal with the cases for the principal series and square-integrable representations separately,  using the restriction of $\GL_2(F)$-distinguished representations of $\GL_2(E)$. There is a key lemma \cite[Lemma 3.1]{anandavardhanan2003distinguished} that if $\tau$ is $\SL_2(F)$-distinguished, then $\tau$ has a Whittaker model with respect to a non-trivial additive character of $E$ which is trivial on $F$. Moreover, the multiplicity $\dim\Hom_{\SL_2(F)}(\tau,\mathbb{C})$ is invariant under the $\GL_2(F)$-conjugation action on $\tau$. In \cite{anandavardhanan2016distinguished}, they use a similar idea to deal with the case for $\SL_n$, involving the restriction of $\GL_n(F)$-distinguished representations of $\GL_n(E)$.
 In this paper, we will use the local theta correspondence to give a new proof for a tempered representation of $\SL_2(E)$. Then we use Mackey Theory and the double coset decomposition  to deal with the principal series, instead of involving representations of $\GL(2)$.  In order to verify Prasad's conjecture \cite[Conjecture 2]{prasad2015arelative} for $\SL(2),$  we will list all possible explicit parameter lifts $\tilde{\phi}:WD_F\rightarrow \PGL_2(\mathbb{C})$ such that $\tilde{\phi}|_{WD_E}=\phi_{\tau},$ which is different from Prasad's descriptions in \cite[\S 18]{prasad2015arelative}. Our methods can also be used for the $\Sp(4)$-distinction problems over a quadratic field extension, see Theorem \ref{Sp(4)-period}.
\begin{thm}\label{prasadsl(2)}
	Assume that $\tau$ is an irreducible  $\SL_2(F)$-distinguished representation of $\SL_2(E),$ with an enhanced $L$-parameter $(\phi_\tau,\lambda )$, where $\lambda$ is a character of the component group $S_{\phi_{\tau}}$, then \[\dim_\mathbb{C} \Hom_{\SL_2(F)}(\tau,\mathbb{C} )= |F(\phi_\tau)|, \]
	where $F(\phi_\tau )=\{\tilde{\phi}:WD_F\rightarrow \PGL_2(\mathbb{C} ):\tilde{\phi}|_{WD_E}=\phi_{\tau}\mbox{ and }\lambda|_{S_{\tilde{\phi}}}\supset \mathbf{1} \}$ and $|F(\phi_{\tau})|$ denotes its cardinality.
\end{thm}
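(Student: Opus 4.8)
My plan is to compute the left-hand side $m(\tau):=\dim_{\mathbb{C}}\Hom_{\SL_2(F)}(\tau,\mathbb{C})$ by hand and match it, case by case, against the combinatorially defined cardinality $|F(\phi_\tau)|$. By Theorem~\ref{localmain} I already know that $m(\tau)\neq 0$ is equivalent to $\phi_\tau$ admitting a lift $\tilde\phi\colon WD_F\to\PGL_2(\mathbb{C})$ together with the genericity of $\tau$ for an additive character of $E$ trivial on $F$; in particular $m(\tau)\neq 0$ forces $F(\phi_\tau)\neq\emptyset$, and conversely. So the real content is the exact numerical comparison, and the natural way to organize it is by the shape of the component group $S_{\phi_\tau}$, which for an $\SL_2(E)$-parameter is one of $\mathbf{1}$, $\mathbb{Z}/2$, or $(\mathbb{Z}/2)^2$. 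In the first two cases one expects $m(\tau)\le 1$ and the count $|F(\phi_\tau)|$ to be transparent; the substance is concentrated in the last, dihedral, case.

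First I would dispose of the tempered representations with the theta correspondence, as announced in the introduction. Realizing $\tau$ as a theta lift from an orthogonal group (from $\SO_3(E)=\PGL_2(E)$, or from a two-dimensional $\Oo(V)(E)$ in the dihedral case) and invoking a seesaw identity that couples the dual pairs over $E$ with their $F$-rational forms, I would identify the $\SL_2(F)$-invariant functionals on $\tau$ with theta lifts to those quadratic spaces over $F$ which become isometric after base change to $E$. Each such space contributes at most one dimension to $\Hom_{\SL_2(F)}(\tau,\mathbb{C})$, the bound being exactly Howe duality (multiplicity one in the theta correspondence), so that $m(\tau)$ becomes a genuine count; and the set of relevant $F$-spaces is precisely parametrized by the lifts $\tilde\phi$ of $\phi_\tau$ carrying a generic member compatible with $\lambda$, i.e.\ by $F(\phi_\tau)$.

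For the non-tempered members, namely the irreducible principal series, I would argue instead by Mackey theory on the double cosets $\SL_2(F)\backslash\SL_2(E)/B(E)$ for a Borel subgroup $B$. The open orbits of $\SL_2(F)$ on $B(E)\backslash\SL_2(E)=\mathbb{P}^1(E)$ contribute to $\Hom_{\SL_2(F)}(\tau,\mathbb{C})$ through the restriction of the inducing character of the torus, while the closed orbits are ruled out by an exponent/support argument; this is exactly the place where the hypothesis $\omega_\tau(-1)=1$ and the genericity furnished by Theorem~\ref{localmain} are needed to exclude spurious functionals. A character of the torus of $\SL_2(E)$ descends to $F$ in as many ways as there are lifts $\tilde\phi$, and I would show that the number of open orbits supporting a nonzero functional, with the correct descended character, again equals $|F(\phi_\tau)|$, the condition $\lambda|_{S_{\tilde\phi}}\supset\mathbf{1}$ recording which descents are actually realized.

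The main obstacle is the bookkeeping in the dihedral case $S_{\phi_\tau}=(\mathbb{Z}/2)^2$, where $m(\tau)$ may exceed $1$. Here I must understand the restriction homomorphism $S_{\tilde\phi}\to S_{\phi_\tau}$ coming from base change (the centralizer can only grow under restriction to $WD_E$, giving $C(\tilde\phi)\subseteq C(\phi_\tau)$), and show that the condition $\lambda|_{S_{\tilde\phi}}\supset\mathbf{1}$, i.e.\ $\lambda$ trivial on the image of $S_{\tilde\phi}$, selects precisely as many lifts as the analytically computed multiplicity. Concretely this requires (i) the explicit enumeration, promised in the introduction, of all $\tilde\phi\colon WD_F\to\PGL_2(\mathbb{C})$ restricting to a fixed dihedral $\phi_\tau$, and (ii) a verification that the Whittaker/genericity normalization pins down exactly those packet members $\tau$ indexed by characters $\lambda$ trivial on each relevant $S_{\tilde\phi}$. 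Matching these two finite sets, rather than any single estimate, is the crux: once the dictionary among packet members, Whittaker normalizations, and parameter lifts is fixed, the equality $m(\tau)=|F(\phi_\tau)|$ will follow by direct comparison in each of the three cases for $S_{\phi_\tau}$.
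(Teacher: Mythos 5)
Your overall architecture---theta correspondence and see-saw identities for the tempered representations, Mackey theory on $B(E)\backslash\SL_2(E)/\SL_2(F)$ for the principal series, then an explicit enumeration of the lifts $\tilde{\phi}$ matched against the analytic count---is the same as the paper's, but two of your guiding claims are false, and each one would derail the execution. First, in the Mackey step you assert that the open orbits carry the functionals while ``the closed orbits are ruled out by an exponent/support argument.'' In fact the closed orbit $\mathbb{P}^1(F)\subset\mathbb{P}^1(E)$ contributes a subspace $\Hom_{F^\times}(|-|_E^z\chi,\mathbb{C})\hookrightarrow \Hom_{\SL_2(F)}(I(z,\chi),\mathbb{C})$, nonzero exactly when $\chi|_{F^\times}=\mathbf{1}$ and $z=0$; this is the first term of the paper's exact sequence \eqref{doublecoset}. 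This closed-orbit functional is the \emph{entire} answer when $\chi=\nu^\sigma/\nu$ does not factor through $N_{E/F}$: the open orbits need $\chi|_{E^1}=\mathbf{1}$, i.e.\ $\chi=\chi_F\circ N_{E/F}$, so your method would return multiplicity $0$ in that case, whereas the true multiplicity is $1$ and $|F(\phi_\tau)|=1$ (the unique lift being the dihedral parameter $i(Ind_{W_E}^{W_F}\nu)$). The boundary-vanishing (Casselman-type) argument is used in the paper only for the degenerate principal series $\mathcal{I}(\frac{1}{2})$ of $\Oo_{2,2}(F)$ restricted to $\Oo(V_K)$, not for the closed $\SL_2(F)$-orbit on $\mathbb{P}^1(E)$, and genericity or $\omega_\tau(-1)=1$ does not rescue the claim.

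Second, your organizing expectation that $m(\tau)\le 1$ unless $S_{\phi_\tau}\cong(\mathbb{Z}/2)^2$, so that only the dihedral case with full component group needs real work, is wrong; the cases you plan to treat as ``transparent'' are where much of the content lies. From the paper: $\tau=I(\mathbf{1})$ and $\tau=I(\chi_F\circ N_{E/F})$ with $\chi^2\neq\mathbf{1}$ have trivial $S_{\phi_\tau}$ yet multiplicity $2$, matched by the two lifts $\tilde{\phi}(\ell)=\begin{pmatrix}\pm\chi_F(\ell)&\\&1\end{pmatrix}$; a dihedral supercuspidal with $\mu_1^2\neq\mathbf{1}$ has $S_{\phi_\tau}\cong\mathbb{Z}/2$ yet multiplicity $2$; and a constituent of $I(\omega_{K/E})$ with $\omega_{K/E}=\chi_F\circ N_{E/F}$, $\chi_F^2=\mathbf{1}$, has $S_{\phi_\tau}\cong\mathbb{Z}/2$ yet multiplicity $3$, matched by three lifts (two diagonal ones and $i(Ind_{W_E}^{W_F}\nu)$). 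So the multiplicity is governed by the number of parameter lifts, which can exceed the size of $S_{\phi_\tau}$; the condition $\lambda|_{S_{\tilde{\phi}}}\supset\mathbf{1}$ serves only to prune lifts for the non-distinguished members of a packet. Until you repair these two points---keep the closed-orbit term in the Mackey sequence, and carry out the see-saw computation for the limits of discrete series over the three spaces $\Oo_{1,1}(F)$, $\Oo(V_E)$, $\Oo(\epsilon V_E)$ (and over the four spaces $V_1,\dots,V_4$ in the supercuspidal case)---the proposed case-by-case matching cannot go through.
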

\begin{rem}
	The statement in Theorem \ref{prasadsl(2)} is slightly different from the original Prasad conjecture for $\SL(2)$. We have used the fact that the degree of the base change map 
	\[\Phi:\Hom(WD_F,\PGL_2(\mathbb{C}))\rightarrow \Hom(WD_E,\PGL_2(\mathbb{C})) \]
at	each parameter $\tilde{\phi}$ is  equal to the size of the cokernel
	\[\mathrm{coker}\{S_{\tilde{\phi}}\rightarrow S_{\phi_{\tau}}^{\Gal(E/F)} \} \]
	for $\tilde{\phi}\in F(\phi_{\tau})$ when $G=\SL(2)$, which is easy to check, see \cite[\S 18]{prasad2015arelative}.
\end{rem}
\begin{rem}
	In \cite[Theorem 1]{beuzart2017distinguished}, Raphael Beuzart-Plessis uses the relative trace formula to give an identity for the multiplicity $\dim_\mathbb{C}\Hom_{H'(F)}(\pi',\chi_{H'})$, where $H'$ is an inner form of $H$ defined over $F$, $\chi_{H'}$ is a quadratic character of $H'(F)$ and $\pi'$ is a stable square-integrable representation of $(R_{E/F}H')(F)=H'(E)$. For example, $H'=\SL_1(D)$ and $H'(E)=\SL_2(E)$, where $D$ is a quaternion division algebra defined over $F$. We are trying to use the local theta correspondence to deal with the distinction problems for the pair $(\SL_2(E),\SL_1(D))$ in the following paper \cite{lu2018SL(D)} as well. More precisely, we will figure out the multiplicity $\dim_\mathbb{C}\Hom_{\SL_1(D)}(\tau,\mathbb{C}) $  for  a smooth irreducible
representation $\tau$ of $\SL_2(E)$.
\end{rem}
\begin{rem}
	In \cite{anan2006integral,anan2013localglobal}, Anandavardhanan and Prasad discuss the global period problems for $\SL_2$ over a quadratic number field extension $\mathbb{E}/\mathbb{F}$. More general, there are several results for the global period problems of $\SL_1(D)$ in \cite[\S9]{anan2013localglobal}, where $\SL_1(D)$ is an inner form of $\SL_2$ defined over a number field $\mathbb{F}$. We hope that we can also use the global theta correspondence to revisit these questions in future.
\end{rem}
Now we briefly describe the contents and the organization of this paper.
In \S $2$, we set up the notation about the  local theta lifts. 
 In \S $3$, we give the proof of Theorem \ref{localmain}, and then we verify Prasad's conjecture for $\SL(2),$ i.e. Theorem \ref{prasadsl(2)} in \S 4. Finally, we give a partial result for the Prasad conjecture for $\Sp_4$, i.e. Theorem \ref{Sp(4)-period}.

\subsection*{Acknowledgments} The author thanks  Wee Teck GAN for his guidance and numerous discussions when he was doing his Ph.D. study at National University of Singapore. He would like to thank Dipendra Prasad for useful discussions as well. He also thanks the anonymous referees for the careful reading and helpful comments, especially for pointing out the inaccurate statement in Theorem \ref{localmain} in the earlier version. 
\section{The Local Theta Correspondences }
In this section, we will briefly recall some results about the local theta correspondence, following \cite{kudla1996notes}.  

Let $F$ be a local field of characteristic zero.
Consider the dual pair $\Oo(V)\times \Sp(W).$
For simplicity, we may assume that $\dim V$ is even. Fix a nontrivial additive character $\psi$ of $F.$
Let $\omega_\psi$ be the Weil representation for $\Oo(V)\times \Sp(W),$ which can be described as follows.
Fix a Witt decomposition $W=X\oplus Y$ and let $P(Y)=\GL(Y)N(Y)$ be the parabolic subgroup stabilizing the maximal isotropic subspace $Y.$ Then \[N(Y)=\{b\in \Hom(X,Y)|~b^t=b \} ,\]
where $b^t\in \Hom(Y^\ast,X^\ast)\cong \Hom(X,Y).$ The Weil representation $\omega_\psi$
can be realized on the Schwartz space $S(X\otimes V)$ and the action of $P(Y)\times \Oo(V)$ is given by the usual formula
$$\begin{cases}
	\omega_{\psi}(h)\phi(x)=\phi(h^{-1}x),&\mbox{for}~ h\in \Oo(V),\\
\omega_{\psi}(a)\phi(x)=\chi_V(\det_Y(a))|\det_Y a|^{\frac{1}{2}\dim V }\phi(a^{-1}\cdot x),&\mbox{for} ~a\in \GL(Y),\\
\omega_\psi(b)\phi(x)=\psi(\langle bx,x\rangle)\phi(x),&\mbox{for}~ b\in N(Y),
\end{cases}$$
where $\chi_V$ is the quadratic character associated to disc$ V\in F^\times/{F^\times}^2$ and
$\langle-,-\rangle$ is the natural symplectic form on $W\otimes V.$
To describe the full action of $\Sp(W),$ one needs to specify the action of a Weyl group element, which acts by a Fourier transform.

If $\pi$ is an irreducible representation of $\Oo(V)$ (resp. $\Sp(W)$), the maximal $\pi$-isotypic quotient has the form 
\[\pi\boxtimes\Theta_\psi(\pi) \]
for some smooth representation of $\Sp(W)$ (resp. $\Oo(V)$). We call $\Theta_\psi(\pi )$
the big theta lift of $\pi.$ It is known that $\Theta_\psi(\pi)$ is of finite length and hence is admissible. Let $\theta_\psi(\pi)$ be the maximal semisimple quotient of $\Theta_\psi(\pi),$ which is called the small theta lift of $\pi.$ Then there is a conjecture of Howe which states that
\begin{itemize}
	\item $\theta_\psi(\pi)$ is irreducible whenever $\Theta_\psi(\pi)$ is non-zero.
	\item the map $\pi\mapsto \theta_\psi(\pi)$ is injective on its domain.
\end{itemize}
This has been proved by Waldspurger \cite{waldspurger1990demonstration} when the residual characteristic $p$ of $F$ is is not $2.$ Recently, it has been proved completely in \cite{gan2014howe,gan2014proof}.
\begin{thm}
	 Howe Conjecture holds.
\end{thm}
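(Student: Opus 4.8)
The plan is not to reprove this statement from scratch — it is a deep theorem whose full proof occupies the cited works of Waldspurger (for residual characteristic $p\neq 2$) and of Gan--Takeda (in general) — but rather to indicate the strategy by which one establishes it, following the latter. There are two assertions: (a) that $\theta_\psi(\pi)$ is irreducible, equivalently that $\Theta_\psi(\pi)$ has irreducible cosocle, whenever $\Theta_\psi(\pi)\neq 0$; and (b) that $\pi\mapsto\theta_\psi(\pi)$ is injective on its domain. The first step is to reduce both to the single \emph{multiplicity-one} bound
\[
\dim\Hom_{\Oo(V)\times\Sp(W)}(\omega_\psi,\pi\boxtimes\pi')\leq 1
\]
for all irreducible $\pi$ of $\Oo(V)$ and $\pi'$ of $\Sp(W)$. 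Granting this bound, injectivity in (b) is essentially formal, while the irreducibility in (a) is obtained by a ``going up and down'' argument in the Witt tower combined with the MVW involution, which identifies the theta lift of $\pi$ with a lift attached to its contragredient and thereby forces the unique irreducible quotient and the unique irreducible submodule of $\Theta_\psi(\pi)$ to coincide.

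Second, the core analytic input is Kudla's filtration of the Jacquet modules of the Weil representation. Fixing a parabolic $P$ in one member of the dual pair, the Jacquet module $R_P(\omega_\psi)$ carries a finite filtration whose successive quotients are, up to normalization, Weil representations for \emph{smaller} dual pairs $\Oo(V')\times\Sp(W')$, parabolically induced from the Levi factors. I would use this to run an induction on $\dim V+\dim W$: by Frobenius reciprocity, a nonzero $\Hom$ space for the big pair produces a nonzero $\Hom$ space pairing the Jacquet modules of $\pi$ and $\pi'$ against these smaller Weil representations, and the inductive hypothesis bounds the multiplicities that arise.

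Third, the passage between these $\Hom$ spaces and the geometry of the dual pair is controlled by the doubling method of Piatetski-Shapiro and Rallis, which furnishes the relevant invariant functionals, and by the conservation relation of Kudla--Rallis (established by Sun--Zhu) governing the first occurrence indices along a Witt tower. Together these guarantee a well-behaved supercuspidal base case for the induction and ensure that the degenerate contributions from reducible standard modules are correctly accounted for.

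The hardest part will be the inductive bookkeeping when $\pi$ or $\pi'$ is non-supercuspidal: there Kudla's filtration contributes several graded pieces, and one must rule out the possibility that two distinct pieces simultaneously support nonzero functionals, which would violate the multiplicity-one bound. Controlling these boundary contributions \emph{uniformly} across all relevant dual pairs, rather than case by case, is precisely the technical heart of the Gan--Takeda argument, and is what renders the residual characteristic $2$ case inaccessible to Waldspurger's earlier approach.
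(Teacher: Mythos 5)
The paper contains no proof of this theorem: it is an imported result, quoted with citations to Waldspurger (for residual characteristic $p\neq 2$) and to Gan--Takeda (in general). Since you likewise defer to the same references rather than reprove the result, your proposal matches the paper in the only sense available --- both treat the Howe conjecture as known input.

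However, as a description of the Gan--Takeda argument, your sketch has two genuine inaccuracies worth correcting. First, the conjecture does not reduce to the multiplicity-one bound $\dim\Hom_{\Oo(V)\times\Sp(W)}(\omega_\psi,\pi\boxtimes\pi')\leq 1$. That bound is a \emph{consequence} of the conjecture, not equivalent to it: it does not rule out the cosocle of $\Theta_\psi(\pi)$ being a direct sum of two non-isomorphic irreducible representations, each occurring with multiplicity one. For the same reason, injectivity is not ``essentially formal'' from multiplicity one. What is formal is the implication from cosocle-irreducibility to injectivity, applied in the opposite direction of the lift: if $\theta_\psi(\pi)\cong\theta_\psi(\pi')\cong\sigma\neq 0$, the natural isomorphisms $\Hom_{G\times H}(\omega_\psi,\pi\boxtimes\sigma)\cong\Hom_H(\Theta_\psi(\pi),\sigma)\cong\Hom_G(\Theta_\psi(\sigma),\pi)$ exhibit both $\pi$ and $\pi'$ as irreducible quotients of $\Theta_\psi(\sigma)$, and irreducibility of the cosocle of $\Theta_\psi(\sigma)$ then forces $\pi\cong\pi'$. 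So the logical heart is cosocle-irreducibility; multiplicity one and injectivity are corollaries, not the engine. Second, the doubling method of Piatetski-Shapiro--Rallis is not an ingredient of the Gan--Takeda proof; it enters at most indirectly, through Sun--Zhu's proof of the conservation relation. The actual mechanism is the one you mention only in passing: comparing the theta lifts of $\pi$ along the two Witt towers and showing, via Kudla's filtration, the MVW involution and the conservation relation, that two non-isomorphic irreducible quotients of $\Theta_\psi(\pi)$ would produce first-occurrence indices whose sum violates the conservation relation. Relatedly, Waldspurger's restriction to $p\neq 2$ stems from his reliance on lattice models of the Weil representation, not from difficulties with boundary terms in Kudla's filtration.
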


 \subsection*{First occurence indices for pairs of orthogonal Witt towers} Let $W_n$ be the $2n$-dimensional symplectic vector space with associated symplectic group $\Sp(W_n)$ and consider the two towers of orthogonal groups attached to the quadratic spaces with nontrivial discriminant. More precisely, let
 $V_E$ (resp. $\epsilon V_E$) be a $2$-dimensional quadratic space with discriminant $E$ and Hasse invariant $+1$ (resp. $-1$), $\mathbb{H}$ be the $2$-dimensional hyperbolic quadratic space over $F$,
\[V_r^+=V_E\oplus \mathbb{H}^{r-1}\quad\mbox{and}\quad V_r^-=\epsilon V_E\oplus\mathbb{H}^{r-1} \]
and denote the orthogonal groups by $\Oo(V_r^+)$ and $\Oo(V_r^-)$ respectively. For an irreducible representation $\pi$ of $\Sp(W_n),$ one may consider the theta lifts $\theta^+_r(\pi)$ and $\theta^-_r(\pi)$ to
$\Oo(V^+_r)$ and $\Oo(V_r^-)$ respectively, with respect to a fixed non-trivial additive character $\psi.$ Set
\[\begin{cases}
r^+(\pi)=\inf\{2r:\theta^+_r(\pi)\neq0 \};\\
r^-(\pi)=\inf\{2r:\theta^-_r(\pi)\neq0 \}.
\end{cases} \]
Then Kudla and Rallis \cite{kudla2005first}, B. Sun and C. Zhu \cite{sun2012conservation} showed:
\begin{thm}
	[Conservation Relation] For any irreducible representation $\pi$ of $\Sp(W_n),$ we have
	\[r^+(\pi)+r^-(\pi)=4n+4=4+2\dim W_n. \]
\end{thm}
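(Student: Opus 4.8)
The plan is to deduce the equality from the two opposite inequalities $r^+(\pi)+r^-(\pi)\le 4n+4$ and $r^+(\pi)+r^-(\pi)\ge 4n+4$, after first recording the basic tower (persistence) property that makes the indices $r^\pm(\pi)$ meaningful. For persistence I would work in the Schr\"odinger model $S(X\otimes V)$ recalled above: passing from $V_r^\epsilon$ to $V_{r+1}^\epsilon$ adjoins a hyperbolic plane, so $V_{r+1}^\epsilon$ contains an isotropic line, and a direct Jacquet-module computation shows that the coinvariants of the Weil representation of $\Sp(W_n)\times\Oo(V_{r+1}^\epsilon)$ along the unipotent radical of the line-stabilizer surject onto the Weil representation of $\Sp(W_n)\times\Oo(V_r^\epsilon)$ up to a twist. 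Hence $\theta^\epsilon_r(\pi)\neq 0$ forces $\theta^\epsilon_{r+1}(\pi)\neq 0$, so that $\theta^\epsilon_r(\pi)$ vanishes exactly for $2r<r^\epsilon(\pi)$; by the Howe conjecture each nonzero lift is moreover irreducible, and nonvanishing in the stable range shows both indices are finite.

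For the upper bound I would use the nonvanishing of the theta lift once the target space is large (the stable range), which already bounds each index, and then refine this into the dichotomy that at least one of the two towers sees $\pi$ early. The mechanism is the doubling method: I would realize both nonvanishing questions inside a single degenerate principal series $I(s,\chi_V)$ of the doubled symplectic group $\Sp(W_n\oplus W_n^-)$ induced from the Siegel parabolic, and use the Rallis inner product formula to translate $\theta^\epsilon_r(\pi)\neq 0$ into the nonvanishing of the local doubling zeta integral attached to $\pi\boxtimes\pi^\vee$ at the point $s_\epsilon$ determined by $\dim V_r^\epsilon$. The two Hasse invariants $\epsilon=\pm$ contribute at the same numerical point but through the two constituents of $I(s,\chi_V)$ that are exchanged by the standard intertwining operator under $s\mapsto -s$.

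For the lower bound, which is the hard direction, the functional equation $s\mapsto -s$ of the intertwining operator $M(s)\colon I(s)\to I(-s)$ interchanges the two towers, so the set of points at which the zeta integral can first be nonzero for $+$ and for $-$ is symmetric about the center; with the correct normalization the sum of the two first-occurrence points is $4n+4$. The main obstacle is precisely the boundary analysis: one must show that the doubling zeta integral does not vanish at the \emph{exact} first-occurrence dimension, i.e.\ control the submodule structure of $I(s,\chi_V)$ at its (typically reducible) point $s_\epsilon$. This is the step that Kudla--Rallis \cite{kudla2005first} could complete only outside a residual reducible case, and which Sun and Zhu \cite{sun2012conservation} settled by a delicate study of a canonical filtration of the Weil representation together with the nondegeneracy of an associated pairing; here I would invoke their argument rather than attempt to re-derive the reducibility analysis from scratch.
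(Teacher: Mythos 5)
The first thing to note is that the paper contains no proof of this statement at all: the Conservation Relation is quoted as a theorem of Kudla--Rallis \cite{kudla2005first} and Sun--Zhu \cite{sun2012conservation}, and is then used as a black box in \S3 (to force vanishing of $\theta_\psi(\mathbf{1})$ and of $\Theta_\psi(\det)$ on the wrong-Hasse-invariant spaces). Your proposal, which explicitly invokes Sun--Zhu for the decisive step, is therefore in substance the same treatment --- an appeal to the literature, prefaced by an expository outline --- and that is the appropriate call here, since a self-contained proof is far beyond the scope of this paper. Your preliminary reductions are also correct and standard: persistence via the Jacquet module of the Weil representation along the unipotent radical of an isotropic-line stabilizer, and finiteness of both indices from nonvanishing in the stable range.

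However, two mechanisms in your outline are wrong and should not be left standing. First, the standard intertwining operator $M(s)\colon I(s)\to I(-s)$ does \emph{not} exchange the two Hasse-invariant constituents at a given point: in the Kudla--Rallis structure theory it relates the submodule $R(V^{\epsilon}_m)\subset I(s)$ to the module $R(V^{\epsilon}_{m'})$ attached to the \emph{complementary space in the same Witt tower}, with $m+m'$ equal to the conserved constant. What actually produces the two-tower dichotomy, and hence the inequality $r^{+}(\pi)+r^{-}(\pi)\le 4n+4$, is a different structural fact: at the relevant points $I(s_0,\chi)$ is the (non-direct) sum $R(V^{+}_m)+R(V^{-}_m)$, so a nonzero doubling functional attached to $\pi\boxtimes\pi^{\vee}$ must be nonzero on one of the two pieces. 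Second, the hard inequality $r^{+}(\pi)+r^{-}(\pi)\ge 4n+4$ (both towers cannot see $\pi$ early) is not obtained from any $s\mapsto -s$ symmetry of zeta-integral nonvanishing points --- the functional equation yields no constraint on first occurrence --- nor from a boundary analysis of the degenerate principal series. Sun and Zhu prove it by a different device: nonvanishing of lifts in both towers produces a nonzero $\Sp(W_n)$-invariant functional on the tensor product of the two Weil representations, and an explicit filtration argument, together with a nondegeneracy statement for a natural pairing, shows that such a functional cannot exist when $\dim V^{+}+\dim V^{-}$ is too small. Your closing sentence in fact describes that argument correctly, but it is inconsistent with the zeta-integral framework you set up just before it; also, Kudla--Rallis had established the relation for supercuspidal representations, not merely ``outside a residual reducible case.'' Since in your write-up both inequalities are ultimately delegated to \cite{kudla2005first} and \cite{sun2012conservation}, the conclusion stands, but the intermediate reasoning as stated would not.
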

On the other hand, one may consider the mirror situation, where one fixes an irreducible representation
of $\Oo(V_r^+)$ or $\Oo(V_r^-)$ and consider its theta lifts $\theta_n(\pi)$ to the tower of symplectic group $\Sp(W_n).$ Then with $n(\pi)$ defined in the analogous fashion,  due to \cite[Theorem 1.10]{sun2012conservation}, we have
\[n(\pi)+n(\pi\otimes\det)=\dim V_r^\pm. \]
 \subsection*{See-saw identities}
 Let $(V,q)$ be a quadratic vector space over $E$ of even dimension. Let $V'=Res_{E/F}V$ be the  same space $V$ but now thought of as a vector space over $F$ with a quadratic form
\[q'(v)=\frac{1}{2}tr_{E/F}q(v). \]
If $W_0$ is a symplectic vector space over $F,$ then $W_0\otimes_F E$
is a symplectic vector space over $E.$ Then we have the following isomorphism of symplectic spaces:
\[Res_{E/F }[(W_0\otimes_F E )\otimes_E V ]\cong W_0\otimes V'=\mathbf{W} \]
There is a pair 
\[(\Sp(W_0),\Oo(V') )\mbox{  and  }(\Sp(W_0\otimes E),\Oo(V)) \]
of dual reductive pairs in the symplectic group $\Sp(\mathbf{W}).$
A pair $(G_1,H_1)$ and $(G_2,H_2)$ of dual reductive pairs in a symplectic group is called a see-saw pair if $H_1\subset G_2$ and $H_2\subset G_1.$
\begin{lem}\cite{kudla1984seesaw}
	For a see-saw pair of dual reductive pairs $(G_1,H_1)$ and $(G_2,H_2)$, let $\pi_1$ be an irreducible representation of $H_1$ and $\pi_2$ of $H_2$, then we have the following isomorphism
	\[\Hom_{H_1}(\Theta_\psi(\pi_2),\pi_1 )\cong \Hom_{H_2}(\Theta_\psi(\pi_1),\pi_2 ). \]
	\end{lem}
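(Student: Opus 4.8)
The plan is to reduce both Hom-spaces to one and the same space of equivariant maps out of the Weil representation $\omega_\psi$. The first thing I would record is the see-saw condition itself: since $(G_1,H_1)$ and $(G_2,H_2)$ are dual reductive pairs, $G_i$ and $H_i$ are mutual centralizers in $\Sp(\mathbf{W})$; combined with $H_1\subset G_2$ and $H_2\subset G_1$ this forces $H_1$ and $H_2$ to commute inside $\Sp(\mathbf{W})$. Hence $H_1\times H_2$ acts on $\omega_\psi$, and it makes sense to consider the space $\Hom_{H_1\times H_2}(\omega_\psi,\pi_1\boxtimes\pi_2)$, which will serve as the common pivot.

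Next I would unwind the definition of the big theta lift. By construction $\Theta_\psi(\pi_2)$ is the representation of $G_2$ for which the maximal $\pi_2$-isotypic quotient of $\omega_\psi|_{G_2\times H_2}$ is $\Theta_\psi(\pi_2)\boxtimes\pi_2$. The central claim I would establish is the adjunction
\[\Hom_{H_1}(\Theta_\psi(\pi_2),\pi_1)\cong\Hom_{H_1\times H_2}(\omega_\psi,\pi_1\boxtimes\pi_2).\]
To prove it, note that restricting any $H_1\times H_2$-equivariant map $\omega_\psi\to\pi_1\boxtimes\pi_2$ to the $H_2$-action shows its image is $\pi_2$-isotypic (this is where irreducibility of $\pi_2$ enters), so the map factors through the maximal $\pi_2$-isotypic quotient $\Theta_\psi(\pi_2)\boxtimes\pi_2$, and conversely every such map arises this way. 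Finally Schur's lemma, $\Hom_{H_2}(\pi_2,\pi_2)=\mathbb{C}$, gives $\Hom_{H_1\times H_2}(\Theta_\psi(\pi_2)\boxtimes\pi_2,\pi_1\boxtimes\pi_2)\cong\Hom_{H_1}(\Theta_\psi(\pi_2),\pi_1)$, and the adjunction follows.

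The same argument applied to the dual pair $(G_1,H_1)$ and the representation $\pi_1$ yields
\[\Hom_{H_2}(\Theta_\psi(\pi_1),\pi_2)\cong\Hom_{H_2\times H_1}(\omega_\psi,\pi_2\boxtimes\pi_1).\]
Since $\Hom_{H_1\times H_2}(\omega_\psi,\pi_1\boxtimes\pi_2)$ and $\Hom_{H_2\times H_1}(\omega_\psi,\pi_2\boxtimes\pi_1)$ are literally the same space of maps, with only the bookkeeping of the two commuting factors swapped, composing the two adjunctions produces the desired isomorphism.

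I expect the only real subtlety, the main obstacle, to be the factorization step in the adjunction: one must be sure that an $H_1\times H_2$-map into $\pi_1\boxtimes\pi_2$ cannot detect anything beyond the maximal $\pi_2$-isotypic quotient, and this is exactly where the irreducibility of $\pi_2$, together with admissibility so that Schur's lemma and finiteness of multiplicities are available, is essential. A secondary technical point I would need to handle is the metaplectic cover and the choice of splitting over each $H_i$, so that $\omega_\psi$ is genuinely a representation of $H_1\times H_2$; with $\dim V$ even as assumed the relevant splittings exist, so this causes no difficulty.
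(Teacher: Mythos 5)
Your proof is correct, and it is the standard argument: pivoting both Hom-spaces through the common space $\Hom_{H_1\times H_2}(\omega_\psi,\pi_1\boxtimes\pi_2)$, using factorization through the maximal $\pi_2$-isotypic (resp.\ $\pi_1$-isotypic) quotient together with Schur's lemma for the irreducible admissible representation being peeled off. The paper itself gives no proof of this lemma --- it is quoted directly from the cited work of Kudla --- and your argument is essentially the one found in that reference and in standard expositions, so the two approaches coincide.
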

\subsection*{Quadratic spaces}
Let $K/E$ be a quadratic field extension and  $V=V_K$ be a $2$-dimensional quadratic space over $E$ with the norm map $N_{K/E}.$ Set $\varpi$ to be the uniformizer of $\mathcal{O}_F$ and  $\Gal(K/E)=\langle s\rangle.$ Let $u$ be a unit in $\mathcal{O}_F^\times\setminus{\mathcal{O}_F^\times}^2$. Assume that the Hilbert symbol $(\varpi,u)_F$ is $-1$.
\begin{ex}
\begin{enumerate}[(i)]
Assume that $p$ is odd. Let $L=F(\sqrt{-\varpi})$ be a quadratic field extension over $F$ with associated quadratic character $\omega_{L/F}=\omega_{F(\sqrt{-\varpi})/F}$ by Local Class Field Theory. Let $K$ be a quadratic field extension over $E$, then $V_K$ is a $2$-dimensional quadratic space over $E$ with norm map $N_{K/E}$. We may regard $V_K$ as a $4$-dimensional quadratic space $V'$ over $F$ with quadratic form $q'(k)=\frac{1}{2}tr_{E/F}N_{K/E}(k)$ for $k\in K$.
	\item If $E=F(\sqrt{\varpi} )$ is ramified,  then
	\begin{itemize}
		\item If $K=E(\sqrt{u} )$, then the discriminant $disc(V')=1\in F^\times/{F^\times}^2$ and the Hasse invariant $\epsilon(V')=-1.$
		\item If $K=E(\sqrt[4]{\varpi})$, then $V'=V_{L}\oplus \mathbb{H}$ and $ disc(V')=-\varpi\in F^\times/{F^\times}^2.$
		\item If $K=E(\sqrt[4]{\varpi}\cdot\sqrt{u} )$, then $disc(V')={L}. $
	\end{itemize}
	\item If $E=F(\sqrt{u})$ is unramified, then 
	\begin{itemize}
		\item  If $K=E(\sqrt{\varpi}),$ then $disc(V')=1$ and $\epsilon(V')=-(-1,\varpi)_F=\begin{cases}+1,&\mbox{if }-1\in u{F^\times}^2;\\-1,&\mbox{if }-1\in{F^\times}^2.
		\end{cases}$
		\item If $K=E(\sqrt{u'})$ and $u'\notin F^\times, $ then $disc(V')=N_{E/F}(u')\in F^\times/{F^\times}^2.$
	\end{itemize}
\end{enumerate}
\end{ex}

If $-1\in ({F^\times})^2$ is a square in $F^\times$ and the discriminant of $V'=Res_{E/F}V_K$ is same as the discriminant of the $2$-dimensional vector space $E$ over $F,$ i.e. $disc(V')=E$, then $\chi_{V'}$ is $\omega_{E/F}$ and its special orthogonal group, denoted by $\SO(V')=\SO(3,1),$ is isomorphic to
\[\SO(3,1)=\frac{ \{(g,\lambda)\in \GL_2(E)\times F^\times:\lambda^2 N_{E/F}(\det g)=1 \}}{ \{(t, N_{E/F}(t)^{-1}):t\in E^\times \}}\cong\frac{\{g\in \GL_2(E):\det(g)\in F^\times \}  }{F^\times } .\]
Set $K^1=\{k\in K^\times:k\cdot k^s=1 \},$  then there is a natural embedding 
\[\Oo(V_K)=K^1\rtimes\mu_2 \subset \SO(3,1)  \mbox{  where  }  K^1= \SO(V_K)\subset \GL_2(E) .\]
In general, the discriminant $disc(V')$ may not be equal to $E$. There is a group embedding  $K^1\hookrightarrow \GL_2(L')$ where $L'=F(\delta),\delta^2=N_{E/F}(u')$ if $K=E(\sqrt{u'}).$
\begin{rem}
	If $V'=Res_{E/F}V_K$ has discriminant $1\in F^\times/{F^\times}^2$
	and Hasse invariant $+1$, then $V'$ is called  a split $4$-dimensional quadratic space over $F.$  Set $\SO_{2,2}(F)=\SO(V')$ to be the special orthogonal group.
\end{rem}
\subsection*{Degenerate principal series representations} Let $V_K$ be a $2$-dimensional quadratic space over $E$ with the norm map $N_{K/E}.$ Assume that $V'=Res_{E/F}V_K$ is a split $4$-dimensional quadratic space over $F.$ There is a natural embedding $\Oo(V_K)\hookrightarrow \Oo_{2,2}(F).$ Let $P$ be a Siegel parabolic subgroup of $\Oo_{2,2}(F).$ Assume that $\mathcal{I}(s)$ is the degenerate principal series of $\Oo_{2,2}(F).$
Let us consider the double coset decomposition $P\backslash\Oo_{2,2}(F)/\Oo(V_K).$ 
\begin{itemize}
	\item If $K$ is a field, then there are four open orbits in $P\backslash\Oo_{2,2}(F)/\Oo(V_K)$.
	\item If $K=E\oplus E,$ then there are one closed orbit and three open orbits in $P\backslash\Oo_{2,2}(F)/\Oo_{1,1}(E)$. 
\end{itemize}
Assume that there is a stratification $P\backslash \Oo_{2,2}(F)/\Oo(V_K)=\sqcup_{i=0}^r X_i$ such that  $\sqcup_{i=0}^k X_i$ is open for each $k$ lying in $\{0,1,2,\cdots, r\}.$
Then
there is an $\Oo(V_K)$-equivariant filtration $\{I_i \}_{i=0,1,2,\cdots,r }$ of $\mathcal{I}(s)|_{\Oo(V_K)}$ such that 
$$0=I_{-1}\subset I_0\subset I_1\subset \cdots\subset I_r=\mathcal{I}(s)|_{\Oo(V_K)}$$
  and the smooth functions in the quotient $ I_{i}/I_{i-1}$ are supported on a single orbit $X_{i}$  in  $P\backslash \Oo_{2,2}(F)/\Oo(V_K)$. 
\begin{defn} Given an irreducible representation $\pi$ of $\Oo(V_K),$ if $\Hom_{\Oo(V_K)}(I_{i+1}/I_i,\pi )\neq0$ implies that  $I_{i+1}/I_{i}$ is supported on the open orbits in $P\backslash \Oo_{2,2}(F)/\Oo(V_K),$ then we say that the representation $\pi$ does not occur on the boundary of $\mathcal{I}(s).$
	\end{defn}
It is well-known that only the open orbits can support supercuspidal representations. Due to the Casselman criterion for a tempered representation,  only the open orbits  can support the tempered representations  in our case if $s=\frac{1}{2}$, see \cite[Lemma 4.2.9]{lu2016new}. 
\section{Proof of  Theorem \ref{localmain}}
Before we prove Theorem \ref{localmain},
let us recall some facts. 
\begin{lem}
	If the discriminant of $V'=Res_{E/F}V_K$ is $E,$ then the theta lift of the trivial representation from $\SL_2(F)$ to $\SO(3,1)=\SO(V')$ is a character, i.e.
	\[\Theta_\psi(\mathbf{1})=\mathbf{1}\boxtimes\omega_{E/F}. \]
\end{lem}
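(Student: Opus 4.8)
The plan is to compute the theta lift directly as a space of coinvariants and then to identify the resulting character. Since $\mathbf{1}$ is one-dimensional, the maximal $\mathbf{1}$-isotypic quotient of the Weil representation is $\mathbf{1}\boxtimes\Theta_\psi(\mathbf{1})$, so $\Theta_\psi(\mathbf{1})$ is the space of $\SL_2(F)$-coinvariants of $\omega_\psi$ with its residual $\Oo(V')$-action; dually, $\Theta_\psi(\mathbf{1})^\vee$ is the space of $\SL_2(F)$-invariant distributions on $V'$ for the Weil action. First I would write down this action in the Schr\"odinger model $S(V')$ using the formulas of \S2 with $\dim V'=4$: the unipotent $n(b)$ acts by $\psi(b\,q'(x))$, the torus element $m(a)$ by $\chi_{V'}(a)|a|^{2}\phi(a^{-1}x)$, the Weyl element by the $\psi$-Fourier transform, and $\Oo(V')$ geometrically by $\phi\mapsto\phi(h^{-1}\cdot)$. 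I would record at the outset that $\chi_{V'}=\omega_{E/F}$ because $\mathrm{disc}\,V'=E$; this is the source of the character in the statement.

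For nonvanishing I would invoke the Conservation Relation with $n=1$, which gives $r^+(\mathbf{1})+r^-(\mathbf{1})=8$. To locate these first occurrences I would show that the lift of $\mathbf{1}$ to the two-dimensional spaces $V_1^{\pm}$ vanishes: there an $\SL_2(F)$-invariant distribution is forced by $n(b)$-invariance to be supported at the origin (the null cone of an anisotropic form is a point), but every distribution supported at a point is homogeneous with eigenvalues that are pure powers of $|a|$, whereas $m(a)$-invariance demands a nontrivial quadratic twist by $\omega_{E/F}$; these cannot match. Hence $r^{\pm}(\mathbf{1})=4$, both four-dimensional lifts are nonzero, and by the Howe conjecture they are irreducible. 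In particular $\Theta_\psi(\mathbf{1})$ is a nonzero irreducible representation of $\Oo(V')$ for whichever Hasse invariant $V'$ carries.

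To see that it is a character and to name it, I would analyse the invariant distributions on the four-dimensional $V'$. As before $n(b)$-invariance confines the support to the null cone $\mathcal{C}=(q')^{-1}(0)$, and $m(a)$-invariance forces the $\omega_{E/F}$-twisted homogeneity $T(\phi(a\,\cdot))=\chi_{V'}(a)|a|^{2}T(\phi)$. Now, crucially, $\mathcal{C}$ is positive-dimensional (the form has Witt index one), so it does support such twisted-homogeneous distributions, and imposing Weyl-element (Fourier) invariance cuts the solution space down to dimension one. Finally I would determine the residual $\Oo(V')$-action on this line. The group $\Oo(V')$ preserves $q'$ and acts transitively on the isotropic lines, so it acts on the one-dimensional space of twisted invariant distributions through a character; tracking the $\chi_{V'}$-twist through the isomorphism $\SO(3,1)\cong\{g\in\GL_2(E):\det g\in F^\times\}/F^\times$ I expect to find that it is trivial on the $\SL_2(E)$-part and equals $\omega_{E/F}\circ\det$ on the similitude factor, i.e. the character $\mathbf{1}\boxtimes\omega_{E/F}$.

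The hard part will be the analysis on the null cone: establishing that the $\omega_{E/F}$-twisted homogeneous invariant distribution exists and is unique up to scalar (controlling the behaviour at the cone vertex and checking that the Fourier condition is compatible and cuts the space to exactly one dimension), and then transporting the intrinsic $\chi_{V'}$-twist into the correct $\SO(V')$-character. Both points depend on keeping the Weil-representation normalization---the factor $\chi_{V'}$ and the power $|a|^{2}$---exactly right; once these are pinned down, the appearance of $\omega_{E/F}$ rather than the trivial character is precisely the fingerprint of the quadratic twist that the positive-dimensional cone can carry but the point cone of $V_1^{\pm}$ cannot.
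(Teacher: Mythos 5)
Your nonvanishing step is correct, and it is genuinely different from (and more self-contained than) what the paper does. Identifying $\Theta_\psi(\mathbf{1})$ with the $\SL_2(F)$-coinvariants of $\omega_\psi$, killing the lifts to the two anisotropic $2$-dimensional spaces by the support argument (in the $p$-adic setting a distribution supported at a point is a multiple of $\delta_0$, whose scaling eigenvalue $1$ cannot equal $\chi_{V}(a)^{-1}|a|^{-1}$), and then invoking the conservation relation $r^+(\mathbf{1})+r^-(\mathbf{1})=8$ to force $r^{\pm}(\mathbf{1})=4$, does prove $\Theta_\psi(\mathbf{1})\neq 0$ for both $4$-dimensional spaces of discriminant $E$. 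By contrast, the paper proves the whole lemma by quoting \cite[Theorem 2.4.11]{lu2016new} for the similitude dual pair (the big theta lift of $St$ from $\GL_2^+(F)$ to $\GSO(3,1)$ is $St_E\boxtimes\omega_{E/F}$), asserting the analogous computation for $\mathbf{1}$, and restricting via $\Theta_\psi(\mathbf{1}|_{\SL_2})=\Theta_\psi(\mathbf{1})|_{\SO(3,1)}$; in particular the $\omega_{E/F}$-twist is imported wholesale from the similitude correspondence rather than recomputed.

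The genuine gap is that everything after your word ``crucially'' --- that the Weyl-element condition cuts the solution space to dimension one, and that the resulting character is $\mathbf{1}\boxtimes\omega_{E/F}$ --- is precisely the content of the lemma, and your proposal only announces it. Concretely: the conditions (support on $\mathcal{C}$) and (twisted homogeneity) do \emph{not} come close to pinning down a line. Since $\delta_0$ has the wrong scaling eigenvalue, such distributions are exactly the twisted-homogeneous distributions on the punctured cone $\mathcal{C}^*$, i.e. the full linear dual of an induced representation $\mathrm{Ind}_{P'}^{\Oo(V')}(\chi_{V'}|\cdot|^{-2})$ from the stabilizer $P'$ of an isotropic line; this space is infinite-dimensional (distributions on the compact space of isotropic lines), so the Fourier condition has to do all of the work, and no argument for that is given --- this is exactly the hard Rallis--Schiffmann/Kudla--Rallis-type analysis, not a finishing touch. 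The identification step is also unsound as sketched: ``$\Oo(V')$ acts transitively on isotropic lines, so it acts on the one-dimensional space through a character'' is vacuous (any group acts on a line through a character), and the implicit picture of the invariant functional as a twisted density on the cone is false. Indeed, a density $\kappa\,d\mu$ (with $d\mu$ the invariant measure, $d\mu(ax)=|a|^2d\mu(x)$) would need $\kappa(ax)=\chi_{V'}(a)|a|^{-4}\kappa(x)$, which diverges at the vertex; worse, if such an eigen-density existed, evaluating on a Levi element $m_a$ of $P'$ would force the residual character to take the value $\chi_{V'}(a)|a|^{-4}$, which is not of finite order, whereas every character of $\SO(V')$ is quadratic. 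So the true invariant functional is a regularized object whose $\Oo(V')$-type cannot be read off by this geometric reasoning; extracting the crucial $\omega_{E/F}$ (as opposed to the trivial character) requires either the degenerate-principal-series analysis or the see-saw/similitude route the paper takes. In short, you have proved the easy half (nonvanishing) and restated, rather than proved, the hard half.
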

\begin{proof}
	Due to \cite[Theorem 2.4.11]{lu2016new}, the big theta lift of the Steinberg representation $St$ from $\GL_2^+(F)$ to $\GSO(3,1)$ is $\Theta_\psi(St)=St_E\boxtimes\omega_{E/F}.$ By a similar argument, one can get $\Theta_\psi(\mathbf{1})=\mathbf{1}\boxtimes\omega_{E/F}. $
	Notice that
	 $$\Theta_\psi(\mathbf{1}|_{\SL_2})=\Theta_\psi(\mathbf{1} )|_{\SO(3,1)} ,$$
	then  we are done.
\end{proof}
\begin{rem}
	In fact, the  theta lift $\theta'_\psi(\mathbf{1})$ from $\SL_2(F)$ to $\Oo(3,1)$
	remains irreducible when restricted to $\SO(3,1),$ see \cite[\S 5]{prasad1993local}.
\end{rem}
Now we start to prove Theorem \ref{localmain}.
\subsection*{Proof of Theorem \ref{localmain}}
According to the representation $\tau,$ we separate the proof  into four cases:
\begin{itemize}
	\item $\tau$ is a supercuspidal representation, see (A);
	\item  $\tau$ is an irreducible principal series representation, see (B);
	\item $\tau$ is a Steinberg representation $St_E$, see (C);
	\item $\tau$ is a constituent of a reducible principle series $I(\chi)$ with $\chi^2=1,$ see (D).
\end{itemize}
These exhause all irreducible smooth representations of $\SL_2(E)$.
\begin{enumerate}[(A).]
	\item 
If $\tau$ is supercuspidal, then 
there exists a character $\mu:K^\times\rightarrow\mathbb{C}^\times$
such that $\phi_\tau=i\circ (Ind_{W_K}^{W_E}\mu)$, where 
\begin{itemize}
	\item  $W_K$ is the Weil group of $K$, where $K$ is a quadratic field extension over $E$;
	\item $\mu$
	does not factor through the norm map $N_{K/E},$ so
	 the irreducible Langlands parameter $$Ind_{W_K}^{W_E}\mu:W_E\rightarrow\GL_2(\mathbb{C})$$ corresponds to a dihedral supercuspidal representation of $\GL_2(E)$ with respect to $K$;
	 \item $i:\GL_2(\mathbb{C})\rightarrow \PGL_2(\mathbb{C})$ is the projection map, which coincides with the adjoint map $$Ad:\GL(2)\rightarrow\SO(3).$$
\end{itemize}
 In fact, the Langlands parameter $\phi$ of the representation $\Sigma$ of $\Oo(V_K)$, where $\tau=\theta_{\psi }(\Sigma)$, is given by
 \[\phi(g)=\begin{cases}
 \begin{pmatrix}
 \chi_K(g)\\&\chi_K^{-1}(g)
 \end{pmatrix}&\mbox{if } g\in W_K\\
 \begin{pmatrix}
 0&1\\1&0
 \end{pmatrix}&\mbox{if }g=s
 \end{cases} \]
 where $s\in W_E\setminus W_K$ and the character $\chi_K:W_K\rightarrow\mathbb{C}^\times$ is the pull back of a nontrivial character $\mu_1$ of $K^1$ under the  map $K^\times\rightarrow K^1$ via $k\mapsto k^sk^{-1}$, i.e. $\chi_K(k)=\mu_1(k^sk^{-1})$, see \cite[\S6.4]{kudla1996notes}. Furthermore, there is an isomorphism between two Langlands parameters of $\Oo(2)$
 \[\phi\otimes\omega_{K/E}\cong Ind_{W_K}^{W_E}\frac{\mu^s}{\mu}. \]
 In other words, one has $\chi_K=\mu^s\mu^{-1}$ and $\mu_1=\mu|_{K^1}$ is the restricted character.

Moreover, if $\mu_1^2\neq\mathbf{1}$, then $\tau=\theta_\psi(Ind_{\SO(V_K)}^{\Oo(V_K)}(\mu_1) ).$
If $\mu_1^2=\mathbf{1}$, then there are two extensions of $\mu_1$ from $\SO(V_K)$ to $\Oo(V_K)$, denoted by $\mu_1^\pm$.  For convenience, if $\mu_1^2\neq\mathbf{1}$, we denote the irreducible representation $Ind_{\SO(V_K)}^{\Oo(V_K)}(\mu_1)$ by $\mu_1^+$ as well.
Assume that
$\tau=\Theta_\psi(\mu_1^+ )$ is supercuspidal.
\par
If the discriminant $disc V'=L\in F^\times/(F^\times )^2$ is nontrivial,  by the see-saw diagram
\[\xymatrix{\tau& \SL_2(E)\ar@{-}[rd] & \Oo(V')\ar@{-}[ld] &\Theta_\psi(\mathbf{1} )\\ \mathbf{1}&\SL_2(F)& \Oo(V_K)&{\mu_1}^+ } \]
 one has an isomorphism
\[\Hom_{\SL_2(F)}(\tau,\mathbb{C} )\cong \Hom_{\Oo(V_K)}(\mathbf{1}\boxtimes\omega_{L/F},\mu_1^+ ) \]
which is nonzero if and only if $\mu_1=\mathbf{1}.$ But $\Hom_{K^1}(\mathbf{1},\mu_1)=0$, then
 $\Hom_{\SL_2(F)}(\tau,\mathbb{C})=0$.
\par
If the discriminant of $V'$ is $1\in F^\times/(F^\times)^2 $ and its Hasse invariant is $-1,$ then the theta lift $\theta_\psi(\mathbf{1})$ from $\SL_2(F)$ to $\Oo(V')$ is zero by Conservation Relation, so that
\[\Hom_{\SL_2(F) }(\tau,\mathbb{C} )=\Hom_{\Oo(V_K)}(\Theta_\psi(\mathbf{1}),\theta_\psi(\tau) )=0. \]
\par
If $V'\cong\mathbb{H}^2$ is a split $4$-dimensional quadratic space over $F,$  we denote $\mathcal{I}(s)$ the degenerate principal series of $\Oo_{2,2}(F)$ and we assume that $F^\times/(F^\times)^2\supset \{1,u,\varpi,u\varpi \}$ and $E=F(\sqrt{u} )$ with associated Galois group $\Gal(E/F)=\langle\sigma\rangle,$ then
\begin{equation}\label{openorbit}
\Hom_{\SL_2(F) }(\tau,\mathbb{C} )=\Hom_{\Oo(V_K)}(\mathcal{I}(\frac{1}{2} ),\mu_1^+ )\cong \bigoplus_{j=1}^4 \Hom_{\Oo(V_j)}(\mu_1^+,\mathbb{C} ) 
\end{equation} 
where $K=F(\sqrt{\varpi},\sqrt{u} )$ is a biquadratic field over $F$,  $V_1=V_{E'}$ $(E'=F(\sqrt{\varpi})$ is a quadratic field extension over $F)$ is a $2$-dimensional quadratic space over $F$ with quadratic form  $q(e')=N_{E'/F }(e')$, Hasse invariant $+1$ and  quadratic character $$\chi_{V_1}=\omega_{E'/F}=\omega_{F(\sqrt{\omega})/F},$$ $V_2=\epsilon'V_1 (\epsilon'\in F^\times\setminus N_{E'/F}(E')^\times)$
is the $2$-dimensional quadratic space $F(\sqrt{\varpi})$ with  quadratic form $\epsilon'N_{E'/F}$, Hasse invariant $-1$ and  quadratic character $\chi_{V_2}=\chi_{V_1};$
similarly, $V_3=V_{E''}$ is a $2$-dimensional quadratic space over $F$ with quadratic character $\omega_{F(\sqrt{\varpi u})/F}$ and Hasse invariant $+1$, where $E''=F(\sqrt{\varpi u} )$ is a  quadratic field extension over $F$ and $V_4=\epsilon''V_3 $ with Hasse invariant $-1$, where $\epsilon''\in F^\times\setminus N_{E''/F}(E'')^\times.$
In this case,  \eqref{openorbit} can be rewritten as the following identity
\begin{equation}\label{thesum}
\dim_{\mathbb{C}} \Hom_{\SL_2(F)}(\tau,\mathbb{C} )=\sum_{j=1}^4\dim_{\mathbb{C}} \Hom_{\Oo(V_j)}(\mu_1^+,\mathbb{C} ) 
\end{equation}
which is nonzero if and only if one of the following holds:
\begin{itemize}
	\item $\mu(x-y\sqrt{\varpi})=\mu(x+y\sqrt{\varpi})$ for $x,y\in F;$ 
	\item $\mu(x-y\sqrt{u\varpi})=\mu(x+y\sqrt{u\varpi}) $ for $x,y\in F.$ 
\end{itemize}
\begin{rem}
	Because $\mu^s\neq\mu$, these two conditions can not hold at the same time unless $p=2.$
\end{rem}

We would like to highlight the fact about the group embeddings $\Oo(V_j)\hookrightarrow K^1\rtimes <s>$ for $j\in\{1,2\}$.
There is a natural group embedding $\SO(V_1)\rtimes<s>\rightarrow K^1\rtimes <s>$. Via the isomorphism between two quadratic $E$-vector spaces $(V_{E'}\otimes_F E,\epsilon' N_{E'/F})\cong (V_K,N_{K/E}),$ one has an identity
\[\dim\Hom_{\Oo(\epsilon'V_{E'})}(\mu_1^+,\mathbb{C} )=\dim\Hom_{\Oo(V_{E'}) }((\mu_1^+)^{g_{\epsilon'}},\mathbb{C} )  \]
where $(\mu_1^+)^{g_{\epsilon'}}$ is a representation of $\Oo(V_K)$ given by 
\[(\mu_1^+)^{g_{\epsilon'}}(x)=\mu_1^+(g^{-1}_{\epsilon'}xg_{\epsilon'}),x\in\Oo(V_K),g_{\epsilon'}\in \GSO(V_K)=K^\times \mbox{ with }N_{K/E}(g_{\epsilon'})=\epsilon'. \]
Moreover, if the Whittaker datum is fixed, then the enhanced $L$-parameter of $(\mu_1^+)^{g_{\epsilon'}}$ is known if the enhanced $L$-parameter of $\mu_1^+$ is given, see \cite[\S3.6]{atobe2017evenorth}.
	\subsection*{Assume $p\neq2$}
\begin{enumerate}[(i).]
	\item If $\mu_1^2\neq\mathbf{1}$,  
	then $Ind_{\SO(V_K)}^{\Oo(V_K)}(\mu_1)$ is irreducible and
	 \[\dim \Hom_{\Oo(V_2)}(Ind_{\SO(V_K)}^{\Oo(V_K)}(\mu_1),\mathbb{C} )=\dim \Hom_{\Oo(V_1)}(Ind_{\SO(V_K)}^{\Oo(V_K)}(\mu_1),\mathbb{C} ). \]
	\item If $\mu_1^2=\mathbf{1}$, then $\mu^2=\chi_E\circ N_{K/E}$ and $\mu^s=-\mu$, so
	\[\dim \Hom_{\Oo(V_2)}(\mu_1^+,\mathbb{C} )=\dim \Hom_{\Oo(V_1) }(\mu_1^- ,\mathbb{C} ).\]
\end{enumerate}
	Hence, if $p\neq2
	$, the equality \eqref{thesum} imply the following: 
	\begin{itemize}
		\item If $\mu_1^2\neq\mathbf{1}$ and $\mu|_{E'}$ factors through the norm map $N_{E'/F}$ for $E'\neq E,$ then $$\dim \Hom_{\SL_2(F)}(\tau,\mathbb{C} )=2.$$
		\item If $\mu_1^2=\mathbf{1}$ and $\mu|_{E'} $ factors through the norm map $N_{E'/F}$ for $E'\neq E,$  then $$\dim \Hom_{\SL_2(F)}(\tau,\mathbb{C} )=1. $$
	\end{itemize}
If $\mu_1^2=\mathbf{1}$  and $\tau=\theta_\psi(\mu_1^+ )$ is $\SL_2(F)$-distinguished, then
\[\dim \Hom_{\SL_2(F)}(\theta_\psi(\mu_1^-),\mathbb{C})=\dim \Hom_{\Oo(V_K) }(\mathcal{I}(\frac{1}{2}), \mu_1^-) \]
which is equal to $$\sum_{j=1}^4\dim \Hom_{\Oo(V_j)}(\mu_1^-,\mathbb{C} )=\dim \Hom_{\SL_2(F) }(\theta_\psi(\mu_1^+),\mathbb{C} ). $$
Hence the dimension $$\dim \Hom_{\SL_2(F)}(\theta_\psi(\mu_1^-),\mathbb{C} )=1  $$ if and only if $\mu|_{E'}$ factors through the norm map $N_{E'/F}$ for $E'\neq E.$
\par
\subsection*{Assume $p=2$}
\begin{enumerate}[(i).]
	\item Suppose that there are two distinct quadratic fields $E'$ and $E''$ over $F$ such that $\mu|_{E'}=\chi_F'\circ N_{E'/F}$ and $\mu|_{E''}=\chi_F''\circ N_{E''/F}$. Furthermore,  $\frac{\chi_F'}{\chi_F''}$ is a quadratic character of $F^\times$ that is not trivial restricted on the Weil group $W_K$ of $K$, i.e. $\frac{\chi_F'}{\chi_F''}$ is different from three quadratic characters $\omega_{E/F}$, $\omega_{E'/F}$ and $\omega_{E''/F}$, which may happen only when $p=2.$ In this case, $\mu^s(t)=\mu(t)\frac{\chi_F'}{\chi_F''}(t)$ for $t\in W_K$,
	\[\dim \Hom_{\Oo(V_1)}(\mu_1^+,\mathbb{C} )=\dim\Hom_{\Oo(V_2)}(\mu_1^+,\mathbb{C})\]  and   $\dim \Hom_{\SL_2(F)}(\tau,\mathbb{C})=4$ by the identity \eqref{thesum}. 
	\item Given a cuspidal representation $\pi$ of $\GL_2(E)$ with $ \pi|_{\SL_2(E)}\supset\tau$, if $\pi$ is not dihedral with respect to any quadratic extension $K$ over $E$, then
	$\pi|_{\SL_2(E)}=\tau$ is irreducible. 
	\par
	We consider a $4$-dimensional quadratic space $X$ over $F$ with discriminant $E,$ then the orthogonal group $\Oo(X)=\Oo(3,1)$ can be naturally embedded into the orthogonal group $\Oo(X\otimes_F E)=\Oo(2,2)(E)$. Let $\pi\boxtimes\pi$ be the irreducible representation of the similitude special orthogonal group $\GSO(2,2)(E)$. By the property of the big theta lift $\Theta(\pi)$ from $\GL_2(E)$ to $\GSO(2,2)(E)$,  $$(\pi\boxtimes\pi)|_{\SO(2,2)(E)}=\Theta(\pi)|_{\SO(2,2)(E)}=\Theta(\pi|_{\SL_2(E)})=\Theta(\tau)$$ is irreducible since $\tau$ is supercuspidal. Let $$\mathfrak{I}(s,\omega_{E/F})=\{f:\Sp_4(F)\longrightarrow\mathbb{C}|f(mng)=|\det(m)|^{s+3/2}\omega_{E/F}( m)f(g),m\in\GL_2(F),g\in\Sp_4(F) \}$$ be the degenerate principal series of $\Sp_4(F)$. Assume that
	$(\pi\boxtimes\pi)^+$ is the unique extension from $\GSO(2,2)(E)$ to $\GO(2,2)(E)$ which participates with the theta correspondence with $\GL_2(E)$. Then $(\pi\boxtimes\pi)^+|_{\Oo(2,2)(E)}$ is irreducible. Considering  the following see-saw diagram
	\[\xymatrix{\mathfrak{I}(\frac{1}{2},\omega_{E/F})&\Sp_4(F)\ar@{-}[rd] & \Oo(2,2)(E)&(\pi\boxtimes\pi
		)^+\\\pi|_{\SL_2(E)} &\SL_2(E)\ar@{-}[ru] &\Oo(3,1)(F)&\mathbb{C} } \]
	due to the structure of $\mathfrak{I}(\frac{1}{2},\omega_{E/F})$  in
	\cite[Proposition 7.2]{gan2014formal}, one can get an equality
	\[\dim \Hom_{\SL_2(E)}(\mathfrak{I}(\frac{1}{2},\omega_{E/F}),\pi )=\dim \Hom_{\Oo(3,1)(F) }((\pi\boxtimes\pi
	)^+,\mathbb{C}). \]
	The supercuspidal representation $\pi|_{\SL_2(E)}$ does not occur on the boundary of $\mathfrak{I}(\frac{1}{2},\omega_{E/F}),$ then
	\[\dim \Hom_{\SL_2(E) }(\mathfrak{I}(\frac{1}{2},\omega_{E/F}),\pi )=\dim \Hom_{\SL_2(F)}(\pi^\vee,\mathbb{C} ). \]
	By the conservation relation, the fact that the first occurrence indice of the determinant map $\det$ of $\Oo(3,1)(F)$ is $4$ implies that $\Theta_\psi(\det)$ from $\Oo(3,1)(F)$ to $\Sp(W_2)=\Sp_4(F)$ is zero and
	 $$\Hom_{\Oo(3,1)(F) }((\pi\boxtimes\pi
	)^-,\mathbb{C} )\cong \Hom_{\Oo(3,1)(F)}((\pi\boxtimes\pi)^+,\det )=\Hom_{\SL_2(E)}(\Theta_\psi(\det),\pi|_{\SL_2(E)})=0.$$
	Hence 
	\begin{equation}
	\begin{split}
	\dim \Hom_{\SL_2(F)}(\pi^\vee,\mathbb{C} )&=\dim\Hom_{\Oo(3,1)(F) }((\pi\boxtimes\pi)^+,\mathbb{C})\\
	&=\dim\Hom_{\Oo(3,1)(F) }((\pi\boxtimes\pi)^+,\mathbb{C})+\dim\Hom_{\Oo(3,1)(F) }((\pi\boxtimes\pi)^-,\mathbb{C})\\
	&=
	\dim \Hom_{\Oo(3,1)(F)}(Ind_{\SO(2,2)(E)}^{\Oo(2,2)(E)}(\pi\boxtimes\pi)|_{\SO(2,2)(E)},\mathbb{C} )\\
	&=\dim\Hom_{\SO(3,1)(F)}((\pi\boxtimes\pi),\mathbb{C})\\
	&=\dim\Hom_{\GSO(3,1)(F)}(\pi\boxtimes\pi,\mathbb{C} )\\
	&=\dim \Hom_{\GL_2(E)}(\pi^\sigma,\pi^\vee ) .
	\end{split}
	\end{equation}
	Therefore, if $\pi$ is not dihedral with respect to any quadratic field extension $K$ over $E$ and so $\tau=\pi|_{\SL_2(E)}$ is irreducible, then the following are equivalent:
	\begin{itemize}
		\item $\pi^\sigma\cong\pi^\vee$, i.e. $\phi_\pi$ is  conjugate-self-dual in the sense of \cite[\S 3]{gan2011symplectic}; 
		\item $\dim \Hom_{\SL_2(F)}(\tau,\mathbb{C})=1.$
	\end{itemize}
\end{enumerate}
	\begin{rem}
	This method can be used to deal with the case when $\tau$ is the Steinberg representation $St_E$ of $\SL_2(E)$, which will imply  $\dim\Hom_{\SL_2(F)}(St_E,\mathbb{C})=1$ directly. It will appear  in the proof of Theorem \ref{Sp(4)-period} as well.
\end{rem}
	\item 
Let $\chi$ be a unitary character of $E^\times.$
If $\tau=I(z,\chi)=Ind_{B(E)}^{\SL_2(E) }\chi|-|_E^z$ (normalized induction) is an irreducible principal series, by the double coset decomposition for $B(E)\backslash\SL_2(E)/\SL_2(F)$
\[\SL_2(E)=B(E)\SL_2(F)\sqcup B(E)\eta_1 \SL_2(F)\sqcup B(E)\eta_2 \SL_2(F) \]
where $\eta_1=\begin{pmatrix}
1\\ \sqrt{d}&1
\end{pmatrix}$ and $\eta_2=\begin{pmatrix}
1\\\epsilon\sqrt{d}&1
\end{pmatrix},\epsilon\in F^\times\backslash N_{E/F}(E^\times),$ then there is a short exact sequence
\begin{equation}\label{doublecoset}
\xymatrix{ \Hom_{F^\times}(|-|_E^z\chi,\mathbb{C})\ar@{^{(}->}[r]& \Hom_{\SL_2(F)}(\tau,\mathbb{C} )\ar[r]& \prod_{j=1}^2 \Hom_{E^1}(\tau^{\eta_j},\mathbb{C} )\ar[r]&\Ext^1_{F^\times}(|-|_E^z\chi,\mathbb{C} ) }
\end{equation}
where $\tau^{\eta_j}\Big(\begin{pmatrix}
a&\ast\\ &\bar{a}
\end{pmatrix}\Big)=\chi(a)$ for $a\in E^1=\ker \{N_{E/F}:E^\times\rightarrow F^\times \}$. 
Then $\Hom_{\SL_2(F)}(\tau,\mathbb{C})\neq0 $ if and only if one of the following conditions holds:
\begin{itemize}
	\item $\chi|_{F^\times}=\mathbf{1}$ and $z=0;$
	\item $\chi=\chi_F\circ N_{E/F} $.
\end{itemize}
In order to verify the Prasad conjecture, we need to figure out the exact dimension $\dim_\mathbb{C}\Hom_{\SL_2(F)}(\tau,\mathbb{C} )$.
\begin{enumerate}[(i).]
	\item If $\chi$ is trivial and $z=0,$ then $\tau=I(\mathbf{1})$ is irreducible and $\dim \Hom_{\SL_2(F)}(\tau,\mathbb{C} )=2. $
	\item If $\chi=\chi_F\circ N_{E/F} $ with $\chi^2=\mathbf{1}\neq\chi$ and $z=0,$ then $I(\chi)$ is reducible, which belongs to the tempered cases and we will  discuss  later, see (D).
	\item If $\chi=\chi_F\circ N_{E/F} $ with $\chi^2\neq\mathbf{1},$ then 
	$\dim \Hom_{\SL_2(F)}( \tau,\mathbb{C})=2. $
	\item If $\chi$ does not factor through $N_{E/F}$ but $\chi|_{F^\times}=\mathbf{1}$ and $s=0,$ then
	\[\dim \Hom_{\SL_2(F)}(\tau,\mathbb{C} )=1. \]
\end{enumerate}

\item
If $\tau=St_E$ is a Steinberg representation of $\SL_2(E)$, then  the exact sequence \eqref{doublecoset} implies that
\[\dim\Hom_{\SL_2(F)}(I(|-|_E),\mathbb{C})=2,\]  so that  $\dim \Hom_{\SL_2(F) }(St_E,\mathbb{C} )=2-1=1$. 
\item Assume that $\tau$ is tempered.
If $\tau\subset I(\omega_{K/E})$ is an irreducible constituent of a reducible principal series,  set $\chi=\omega_{K/E},~\chi^+(\omega)=1,\omega=\begin{pmatrix}
&1\\1
\end{pmatrix},$ then from \cite[Page 86]{kudla1996notes}, we can see that
\[I(\omega_{K/E} )=\theta_\psi(\chi^+ )\oplus\theta_\psi(\chi^-)\mbox{ where }\chi^-= \chi^+\otimes\det  \]
and $\tau=\theta_\psi(\chi^+)=\Theta_\psi(\chi^+),$ where  $\theta_\psi(\chi^+)$ is the theta lift of $\chi^+$ from
$\Oo_{1,1}(E)$ to $\SL_2(E).$ By the see-saw diagram
\[\xymatrix{\tau& \SL_2(E)\ar@{-}[rd]&\Oo_{2,2}(F)\ar@{-}[ld]&\mathcal{I}(1/2)\\\mathbb{C}&\SL_2(F)&\Oo_{1,1}(E)&\chi^+ } \]
where $\mathcal{I}(s)$ is the principal series of $\Oo_{2,2}(F)$,   we have an identity
\[\dim \Hom_{\SL_2(F) }(\tau,\mathbb{C} )=\dim \Hom_{\Oo_{1,1}(E)}(\mathcal{I}(\frac{1}{2} ),\chi^+) \]
which is equal to $$\dim \Hom_{\Oo_{1,1}(F)}(\chi^+,\mathbb{C} )+\dim \Hom_{\Oo(V_E)}(\chi^+,\mathbb{C} )+\dim \Hom_{\Oo(\epsilon V_E)}(\chi^+,\mathbb{C}) .$$

If $\chi|_{F^\times}=1,$ then $\dim \Hom_{\Oo_{1,1}(F)}(\chi^+,\mathbb{C})=1 $ and $\dim \Hom_{\Oo_{1,1}(F)}(\chi^-,\mathbb{C})=0.$ If $\chi=\chi_F\circ N_{E/F},$ then $\dim \Hom_{\Oo(V_E)}(\chi^+,\mathbb{C} )=1. $
 Hence we have the conclusion:
 \begin{itemize}
 	\item if $\chi=\omega_{K/E}=\chi_F\circ N_{E/F}$ with $\chi_F^2=1,$ then
 	\[\dim\Hom_{\Oo(\epsilon V_E)}(\chi^+,\mathbb{C})=\dim\Hom_{\Oo(V_E)}(\chi^+,\mathbb{C})=1 \]
 	and   $$\dim \Hom_{\SL_2(F)}(\tau,\mathbb{C} )=3;$$
 	\item 
 	if $\chi=\chi_F\circ N_{E/F}$ with $\chi_F^2=\omega_{E/F},$ then
 	$$\dim\Hom_{\Oo(\epsilon V_E)}(\chi^+,\mathbb{C})=\dim\Hom_{\Oo(V_E)}(\chi^-,\mathbb{C}) $$ and $$\dim \Hom_{\SL_2(F)}(\theta_\psi(\chi^+),\mathbb{C} )=\dim\Hom_{E^1}(\chi,\mathbb{C})=1; $$
 	\item if $\chi $ does not factor through the norm map $N_{E/F},$ but $\chi|_{F^\times}=1, $ then
 	\[\dim \Hom_{\SL_2(F)}(\tau,\mathbb{C} )=1. \]
 \end{itemize}
  In this case, if $\dim \Hom_{\SL_2(F)}(\theta_\psi(\chi^+),\mathbb{C} )\neq0 ,$  then $\dim \Hom_{\SL_2(F)}(\theta_\psi(\chi^- ),\mathbb{C} )$ is equal to the sum
  \[\dim \Hom_{\Oo_{1,1}(F)}(\chi^+,\det)+\dim \Hom_{\Oo(V_E)}(\chi^+,\det)+\dim\Hom_{\Oo(\epsilon V_E)}(\chi^+,\det ), \]
  which is nonzero if and only if $\chi=\chi_F\circ N_{E/F}$ with $\chi_F^2=\omega_{E/F}.$
\end{enumerate}
After  the discussions for the parameter side in \S $4$, we  finish the proof of Theorem \ref{localmain}.
 \section{the Prasad Conjecture for $\SL(2)$}
 Let us recall a well-known result for $\SL_2$.
 \begin{prop}
 	[\cite{shelstad1979notes}] Let $\phi:WD_F\rightarrow \GL_2(\mathbb{C})$ be an irreducible representation and $\tau=i(\phi)=Ad(\phi):WD_F\rightarrow \PGL_2(\mathbb{C})$ be
 	the associated discrete series $L$-parameter for $\SL_2,$ then there is a short exact sequence of component groups
 	\[\xymatrix{1\ar[r]& S_{\phi}\ar[r]& S_{\tau}\ar[r]&I(\phi)\ar[r]&1 } \]
 	where $I(\phi)=\{\chi:F^\times\rightarrow\mathbb{C}^\times|\chi^2=1\mbox{ and }\phi\otimes\chi=\phi \}.$
 \end{prop}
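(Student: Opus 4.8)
The plan is to read the asserted sequence off from the central isogeny
\[ 1 \longrightarrow \mathbb{C}^\times \longrightarrow \GL_2(\mathbb{C}) \xrightarrow{\ i\ } \PGL_2(\mathbb{C}) \longrightarrow 1, \]
where $i$ is the projection of the statement, so that $\tau = i\circ\phi$. First I would dispose of the left-hand term of the sequence: since $\phi$ is irreducible, Schur's lemma identifies the centralizer $C(\phi)$ of $\phi$ in $\GL_2(\mathbb{C})$ with the scalars $\mathbb{C}^\times$, which is connected, so $S_\phi = C(\phi)/C^\circ(\phi)$ is trivial; the map $S_\phi \to S_\tau$ is the one induced by $i$ on centralizers and factors through $i(\mathbb{C}^\times) = \{1\}$. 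Thus the real content is the isomorphism $S_\tau \cong I(\phi)$, and the work is to produce a surjection $S_\tau \twoheadrightarrow I(\phi)$ with trivial kernel.

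To build this map, take $\bar g \in C(\tau)$ and choose any lift $\tilde g \in \GL_2(\mathbb{C})$ with $i(\tilde g) = \bar g$. For each $w \in WD_F$ the element $\tilde g\,\phi(w)\,\tilde g^{-1}\phi(w)^{-1}$ projects under $i$ to $\bar g\,\tau(w)\,\bar g^{-1}\tau(w)^{-1} = 1$, hence lies in $\ker i = \mathbb{C}^\times$; calling this scalar $\alpha(w)$ we obtain
\[ \tilde g\,\phi(w)\,\tilde g^{-1} = \alpha(w)\,\phi(w), \qquad w \in WD_F. \]
Evaluating on a product $w_1 w_2$ shows $\alpha\colon WD_F \to \mathbb{C}^\times$ is a homomorphism, and taking determinants of the displayed identity gives $\alpha(w)^2 = 1$, so $\alpha$ is a quadratic character; such a character kills the (divisible) Deligne part and therefore factors through $W_F^{\mathrm{ab}} \cong F^\times$ by local class field theory. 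The identity itself exhibits $\tilde g$ as an isomorphism $\phi \otimes \alpha \cong \phi$, so $\alpha \in I(\phi)$. Replacing $\tilde g$ by another lift only multiplies it by a scalar and leaves $\alpha$ untouched, so $\bar g \mapsto \alpha$ is well defined, and the same product computation shows it is a homomorphism $C(\tau) \to I(\phi)$.

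It then remains to verify exactness. For injectivity, $\alpha = \mathbf{1}$ forces $\tilde g$ to commute with every $\phi(w)$, i.e. $\tilde g \in C(\phi) = \mathbb{C}^\times$ by Schur, whence $\bar g = i(\tilde g) = 1$; so the kernel is trivial. For surjectivity, any $\alpha \in I(\phi)$ is accompanied by an intertwiner $\tilde g$ realising $\phi \otimes \alpha \cong \phi$ (unique up to scalar, again by Schur), and then $i(\tilde g) \in C(\tau)$ maps to $\alpha$. Hence $C(\tau) \cong I(\phi)$ is finite, so $C^\circ(\tau) = 1$ and $S_\tau = C(\tau)$, and combining the two maps yields the exact sequence $1 \to S_\phi \to S_\tau \to I(\phi) \to 1$.

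I expect the only delicate points to be bookkeeping: confirming that $\alpha$ is a genuine character rather than a mere function and that it is independent of the chosen lift, and matching the twisting relation $\phi \otimes \alpha \cong \phi$ with the description of $I(\phi)$ as quadratic characters of $F^\times$ through local class field theory. Beyond this I anticipate no essential obstacle, since the structural input is just Schur's lemma together with the determinant computation forcing $\alpha$ to have order dividing two.
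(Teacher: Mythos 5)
Your proof is correct. The paper in fact gives no argument for this proposition at all—it is quoted from Shelstad's notes with a citation—so there is no internal proof to compare against; your argument (lift $\bar g\in C(\tau)$ to $\tilde g\in\GL_2(\mathbb{C})$, extract the quadratic character $\alpha$ measuring the failure of $\tilde g$ to commute with $\phi$, then use Schur's lemma and the determinant computation to conclude $S_\phi=1$, $C^\circ(\tau)=1$ and $S_\tau\cong I(\phi)$) is precisely the standard proof behind that citation, and it establishes the statement in exactly the form the paper uses later in \S 4, namely $S_{\phi_\tau}\cong I(\phi)$ with $|I(\phi)|\in\{1,2,4\}$ (finiteness of $I(\phi)$ following from the finiteness of $F^\times/(F^\times)^2$ for a $p$-adic field).
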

 Assume that $\tau$ is $\SL_2(F)$-distinguished and $\ell\in W_F\backslash W_E,\omega_{E/F}(\ell)=-1.$  We start to verify the Prasad conjecture for $\SL_2$. The main work here is to choose a proper element $A\in\PGL_2(\mathbb{C})$ such that $\tilde{\phi}(\ell)=A$ and $\tilde{\phi}|_{WD_E}=\phi_{\tau}$ for a certain Langlands parameter $\tilde{\phi}\in\Hom(WD_F,\PGL_2(\mathbb{C}))$  under the assumption that $\tau$ is $\SL_2(F)$-distinguished.  According to the discussions in \S3, we separate them into four parts as well.
 	\par Recall that $F^\times/{F^\times}^2\supset\{1,u,\varpi,u\varpi \}$, $E=F(\sqrt{u})$, $E''=F(\sqrt{u\varpi})$  and $E'=F(\sqrt{\varpi} )$. Let $K=F(\sqrt{u},\sqrt{\varpi} )$ be a biquadratic field extension over $F$ with Galois group $\Gal(K/F)=\langle1,s,\sigma,s\sigma \rangle$ and Weil group $W_K$. Suppose that $\Gal(K/E)=\langle s\rangle$, $\Gal(K/E'')=\langle s\sigma\rangle$ and $\Gal(K/E')=\langle\sigma \rangle$. 
 \begin{enumerate}[(A).]
 	\item Assume that $\tau\subset \pi|_{\SL_2(E)}$ is a supercuspidal representation of $\SL_2(E)$.
 	 If  the Langlands parameter of $\tau$  $$\phi_\tau=i(Ind_{W_K }^{W_E}\mu)=\omega_{K/E}\oplus Ind_{W_K}^{W_E}(\frac{\mu^s}{\mu})$$ with $\mu|_{E''}=\chi_F\circ N_{E''/F},$ then $\mu(t)\mu^{s\sigma}(t)=\chi_F(t)$ for $t\in W_K$. So
 	\[\Big(\frac{\mu^s}{\mu} \Big)^\sigma(t)=\frac{\mu^{s\sigma}(t)}{\mu^\sigma(t)}=\frac{\chi_F(t)}{\mu(t)\mu^{\sigma}(t)}=\frac{\chi_F(sts^{-1})}{\mu(t)\mu^\sigma(t) }=\frac{\mu^s(t)}{\mu(t)} \mbox{ for }t\in W_K \]
 	i.e. $\frac{\mu^s}{\mu}=\chi_{E'}\circ N_{K/E'}$ for a character $\chi_{E'}$ of $E'^\times$.
 	\subsection*{Assume $p\neq 2$} 
 	\begin{itemize}
 		\item If $\mu_1^2=\mathbf{1}$, then the Langlands parameter
 		$\phi_{\tau}=\omega_{K/E}\oplus\omega_{K_2/E}\oplus\omega_{K_1/E}, $ where each $K_j\neq K $ is a quadratic field extension over $E$. 
 			\[\xymatrix{&&W_F\ar@{-}[rd]\ar@{-}[ld] &\\& W_E\ar@{-}[ld]\ar@{-}[d]\ar@{-}[rd]& & W_{E'}\ar@{-}[ld]\\ W_{K_2}&W_{K_1}&W_K } \]
 			Set\begin{equation}\label{parameterlift}
 			 \tilde{\phi}=\omega_{E'/F}\oplus Ind_{W_{E'}}^{W_F}\chi_{E'}
 			\end{equation}
 			  where $E'\neq E$ are two distinct quadratic field extensions over $F,$ then
 		$ \tilde{\phi}|_{W_E}=\phi_\tau$.
 		\item If $\mu_1^2\neq\mathbf{1} $, then the Langlands parameter
 		\[\phi_{\tau}=\omega_{K/E}\oplus Ind_{W_K}^{W_E}\frac{\mu^s}{\mu} \]
 		has a lift $\tilde{\phi}$ defined in \eqref{parameterlift}. Moreover,
 		 there is one more lift $$\tilde{\phi}'=\omega_{E'/F}\oplus Ind_{W_{E'}}^{W_F}\chi_{E'}^{-1}\mbox{  with  }\chi_{E'}\circ N_{K/E'}=\frac{\mu^s}{\mu} $$
 		since $Ind_{W_K}^{W_E}(\frac{\mu}{\mu^s})=Ind_{W_K}^{W_E}(\frac{\mu^s}{\mu})$ is irreducible.
 	\end{itemize}
 In the $L$-packet $\Pi_{\phi_\tau}$ containing $\phi_\tau$, set $\phi=Ind_{W_K}^{W_E}\mu$ and $\phi_{\tau}=Ad(\phi)$.
 \par
  If the component group $S_{\phi_{\tau}}$ has order $4$, then we denote the four characters of $S_{\phi_{\tau}}$ by $\{\lambda^{++},\lambda^{--},\lambda^{-+},\lambda^{+-} \}$  which corresponds to the $L$-packet $\Pi_{\phi_\tau}=\{\tau^{++},\tau^{--},\tau^{-+},\tau^{+-} \}$. If the order of $S_{\phi_{\tau}}$  is $2$, then we denote its two characters as $\{\lambda^+,\lambda^- \}$, which corresponds to  $\Pi_{\phi_\tau}=\{\tau^+,\tau^- \}$.
 \begin{itemize}
 	\item If $\mu_1^2=\mathbf{1}$, then $|I(\phi)|=4,$ two of them are $\SL_2(F)$-distinguished and of dimension one, say $\tau^{++}$ and $\tau^{--}$. The component group $S_{\tilde{\phi}}=\mu_2\hookrightarrow S_{\phi_\tau}$ is the diagonal embedding, then $\tau^{+-}$ and $\tau^{-+}$ are not $\SL_2(F)$-distinguished, which is compatible with the fact that neither the restricted representation $\lambda^{+-}|_{S_{\tilde{\phi}} }$ nor $\lambda^{-+}|_{S_{\tilde{\phi} }}$  contains the trivial character of $S_{\tilde{\phi}}$, where $\lambda^{+-}$ (resp. $\lambda^{-+}$) corresponds to the representation $\tau^{+-}$ (resp. $\tau^{-+}$).
 	\item If $\mu_1^2\neq\mathbf{1}$,
 	then $|I(\phi)|=2$ and only one of them is $\SL_2(F)$-distinguished, say $\tau^+=\theta_{\psi,V_K,W }((\frac{\mu^s}{\mu })^+).$  If $\tau^-=\theta_{\psi,\epsilon V_K,W }((\frac{\mu^s}{\mu} )^+)$ corresponds to the nontrivial character of $S_{\phi_\tau},$ denoted by $\lambda^-$, where $\epsilon V_K$ is the $2$-dimensional quadratic space $K$ over $E$ with a quadratic form $\epsilon N_{K/E},~\epsilon\in E^\times\setminus N_{K/E}(K^\times)$ and the Hasse invariant of $Res_{E/F}(\epsilon V_K )$ is $-1,$ then
 	\[\dim \Hom_{\SL_2(F)}(\tau^-,\mathbb{C} )=0. \]
 	Note that $S_{\tilde{\phi}}=\mu_2\cong S_{\phi_\tau }, $ then
 	$\lambda^-|_{S_{\tilde{\phi}} }$ is nontrivial.
 \end{itemize}
 \subsection*{Assume $p=2$}  There are some special cases if $p=2$.
 \begin{itemize}
 	\item 
 If $\dim \Hom_{\SL_2(F)}(\tau,\mathbb{C} )=4,$ then $\mu_1^2=\mathbf{1}$ and there is a quadratic field extension $D$ over $K$ such that $\chi_K=\omega_{D/K}$ and $D$ is the composite field $KE_4$, where $E_4$ is the quadratic field extension of $F$ corresponding to the quadratic character $\frac{\chi'_F}{\chi''_F}$ where $\mu|_{E'}=\chi_F'\circ N_{E'/F},$  $\mu|_{E''}=\chi_F''\circ N_{E''/F}$ and $E',E''$ are two distinct quadratic field extensions over $F,$ which are different from $E.$
 \[\xymatrix{&&&D\ar@{-}[lld]\\&K\ar@{-}[ld]\ar@{-}[rd]\ar@{-}[d]&K_1\ar@{-}[ru]\ar@{-}[d]&K_2\ar@{-}[rd]\ar@{-}[ld]\ar@{-}[u]\\ E''&E'&E&&E_4\ar@{-}[luu] } \]
 Set $\{1,u,\varpi,d, du,\varpi u,\varpi d,\varpi du \}\subset F^\times/{F^\times}^2$, $K=F(\sqrt{u},\sqrt{\varpi}),~K_2=F(\sqrt{u},\sqrt{d}),~E_4=F(\sqrt{d})$ and $K_1=F(\sqrt{u},\sqrt{d\varpi})$. There are $4$ distinct Langlands parameter lifts of ${\phi}_\tau$ 
 \[\tilde{\phi}_1=\omega_{E_4/F}\oplus\omega_{F(\sqrt{\varpi u})/F}\oplus\omega_{F(\sqrt{d\varpi u})/F },~~\tilde{\phi}_2=\omega_{E_4/F}\oplus \omega_{F(\sqrt{\varpi})/F}\oplus \omega_{F(\sqrt{d\varpi})/F }, \]
 \[\tilde{\phi}_3=\omega_{F(\sqrt{du})/F}\oplus \omega_{F(\sqrt{\varpi u})/F }\oplus\omega_{F(\sqrt{d \varpi})/F }\mbox{ and }\tilde{\phi}_4=\omega_{F(\sqrt{du} )/F }\oplus\omega_{F(\sqrt{\varpi} )/F}\oplus \omega_{F(\sqrt{\varpi ud} )/F }, \]
 where $\omega_{F(\sqrt{\varpi})/F}$ is the quadratic character associated to the quadratic field extension $F(\sqrt{\varpi})/F$, similarly for the other quadratic characters $\omega_{F(\sqrt{du})/F}$ and so on.
 Since $S_{\tilde{\phi}_i}=S_{\phi_{\tau}}\cong \mu_2\times\mu_2 ,$ only $\tau^{++}$ can survive, i.e. the rest elements in the $L$-packet $\Pi_{\phi_\tau}$ can not be $\SL_2(F)$-distinguished. 
 \item
 If $\dim \Hom_{\SL_2(F)}(\tau,\mathbb{C})=1$ and $\pi$ is not dihedral, i.e. $\tau=\pi|_{\SL_2(E)}$ is irreducible, then
 $\phi_{\tau}=\phi^\sigma_\tau$. There exists one element $A\in\PGL_2(\mathbb{C})$ such that 
 \[\phi_{\tau}(\ell\cdot t\cdot\ell^{-1})=A\cdot\phi_{\tau}(t)\cdot A^{-1} \]
 for $t\in WD_E$. Set $\tilde{\phi}(\ell)=A$ and $\tilde{\phi}(t)=\phi_{\tau}(t)$ for $t\in WD_E$. Since $\phi_{\tau}$ is irreducible, $A$ is unique. Hence $\phi_{\tau}$  admits a unique lift $\tilde{\phi}:W_F\rightarrow \PGL_2(\mathbb{C})$ such that $\tilde{\phi}|_{W_E}=\phi_{\tau}.$
\end{itemize}
 	\item If $\phi_\tau(t)=\begin{pmatrix}
 	\chi(t)|t|^z\\&1
 	\end{pmatrix}\in\PGL_2(\mathbb{C}), $ then
 	\begin{itemize}
 		\item if $z=0$ and $\chi$ is trivial, then $\tilde{\phi}(\ell)$ can be chosen as $\begin{pmatrix}
 		\omega_{E/F}(\ell)\\&1
 		\end{pmatrix}=\begin{pmatrix}
 		-1&\\&1
 		\end{pmatrix}$ or $\begin{pmatrix}
 		1\\&1
 		\end{pmatrix};$
 		\item if $z=0,$ $\chi$ does not factor through the norm $N_{E/F}$ but $\chi|_{F^\times}=1,$ set $\chi=\frac{\nu^\sigma}{\nu}$ for a quadratic character $\nu$ of $E^\times$,  then there is only one lift
 		\[\tilde{\phi}=i(Ind_{W_E}^{W_F}\nu );\]
 		\item if $\chi=\chi_F\circ N_{E/F},\chi^2\neq1,$ then there are two lifts
 		\[\tilde{\phi}(\ell)=\begin{pmatrix}
 		\chi_F(\ell)\\&1
 		\end{pmatrix}\mbox{  or  }\begin{pmatrix}
 		-\chi_F(\ell)\\&1
 		\end{pmatrix}. \]
 	\end{itemize}
 	\item If $\phi_\tau=Ad(\mathbf{1}\otimes S_2)$ corresponds to the Steinberg representation $St_E$ of $\SL_2(E)$, then there is only one lift $\tilde{\phi}=Ad(\mathbf{1}\otimes S_2):WD_F\rightarrow\PGL_2(\mathbb{C}).$
 	\item If $\phi_\tau(t)=\begin{pmatrix}
 	\omega_{K/E}(t)\\&1
 	\end{pmatrix}\in\PGL_2(\mathbb{C}),$ then there are several subcases.
 	\begin{itemize}
 		\item If $\omega_{K/E}=\chi_F\circ N_{E/F}$ with $\chi_F^2=\mathbf{1},$ then 
 		\[\tilde{\phi}(\ell)=\begin{pmatrix}
 		\chi_F(\ell)\\&1
 		\end{pmatrix}\mbox{ or }\begin{pmatrix}
 		-\chi_F(\ell)\\&1
 		\end{pmatrix}. \]
 		Moreover, $\omega_{K/E}|_{F^\times}=\chi_F^2=\mathbf{1},$ then $\omega_{K/E}=\frac{\nu^\sigma}{\nu}$ for a quadratic character $\nu$ of $E^\times$, we may set
 		\[\tilde{\phi}_3=i(Ind_{W_E}^{W_F}\nu)=\omega_{E/F}\oplus Ind_{W_E}^{W_F}(\frac{\nu^\sigma}{\nu}). \]
 		\item If $\omega_{K/E}=\chi_F\circ N_{E/F} $ with $\chi_F^2=\omega_{E/F},$ then there is only one extension
 		$$\tilde{\phi}(\ell)=\begin{pmatrix}
 		\chi_F(\ell)\\&1
 		\end{pmatrix} .$$
 		\item If $\omega_{K/E}$ does not factor through the norm map $N_{E/F}$ but $\omega_{K/E}|_{F^\times}=1,$ then
 		\[\tilde{\phi}=i(Ind_{W_E}^{W_F}\nu)\mbox{  where }\omega_{K/E}=\frac{\nu^\sigma}{\nu} .\]
 	\end{itemize}
 \end{enumerate}
Hence, we finish the proof of Theorem \ref{localmain} and Theorem \ref{prasadsl(2)}.
\subsection*{Further discussion}
Inspired by the case that $\tau=\pi|_{\SL_2(E)}$ is an irreducible representation of $\SL_2(E)$, where $\pi$ is a representation of $\GL_2(E)$, we have a certain result of the Prasad conjecture for $G=\Sp_4$. 
\begin{thm}\label{Sp(4)-period} Let $E$ be a quadratic field extension over a nonarchimedean local field $F$ with characteristic zero.
	 Assume that $\tau$ is an irreducible representation of $\Sp_4(E)$.
	Let $\pi$ be an irreducible representation of $\GSp_4(E)$ and $ \pi|_{\Sp_4(E)}\supset\tau$, then
	\begin{enumerate}[(i)]
		\item if $\pi$ is tempered and non-generic, then $\Hom_{\Sp_4(F)}(\tau,\mathbb{C})=0$;
		\item if  $\pi$ is a generic square-integrable representation of $\GSp_4(E)$ 
		 and $\pi|_{\Sp_4(E)}$ is irreducible, then the $L$-packet $\Pi_{\phi_\tau}$ is a singleton and
		\[\dim\Hom_{\Sp_4(F)}(\tau,\mathbb{C} )=|F(\phi_{\tau})| , \]
		where $F(\phi_{\tau})=\{\tilde{\phi}:WD_F\rightarrow \SO_5(\mathbb{C})\big| \tilde{\phi}|_{WD_E}=\phi_{\tau} \}$ and $|F(\phi_{\tau})|$ denotes its cardinality.
	\end{enumerate}
\end{thm}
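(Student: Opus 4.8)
The plan is to mirror the strategy developed for $\SL(2)$, replacing the orthogonal dual pairs $(\Oo(V_K),\SL_2)$ used there with the see-saw involving $\Sp_4$ and an orthogonal group of the right dimension, then reading off the multiplicity via distinction on the smaller member of the see-saw pair. Concretely, for an irreducible $\tau$ sitting inside $\pi|_{\Sp_4(E)}$, I would attach to $\pi$ its theta lift to a similitude orthogonal group $\GSO$ of a quadratic space of suitable dimension over $E$, use the see-saw identity (the Lemma of Kudla quoted above) to convert $\Hom_{\Sp_4(F)}(\tau,\mathbb{C})$ into a $\Hom$-space over the orthogonal group $\Oo(3,1)(F)$ embedded diagonally in $\Oo(2,2)(E)$, exactly as in the supercuspidal argument case (B)(ii) in the proof of Theorem~\ref{localmain}. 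The degenerate principal series $\mathfrak{I}(s,\omega_{E/F})$ of $\Sp_4(F)$ plays the central analytic role: at $s=\tfrac12$ its structure (via \cite[Proposition 7.2]{gan2014formal}) together with the fact that square-integrable $\tau$ does not occur on the boundary reduces the computation to the open orbits in the relevant double-coset decomposition.

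\textbf{Part (i): the vanishing statement.} For $\pi$ tempered and non-generic, I would argue that the theta lift obstruction forces $\Hom_{\Sp_4(F)}(\tau,\mathbb{C})=0$. The cleanest route is via the see-saw and the conservation relation: non-genericity of $\pi$ means its theta lift to the split orthogonal tower lands too high (or vanishes at the relevant step), so the analogue of the computation $\Hom_{\Oo(3,1)(F)}((\pi\boxtimes\pi)^{\pm},\mathbb{C})$ collapses. I would invoke the fact that only open orbits support tempered representations (the remark after the Definition of ``occurring on the boundary'') to ensure no contribution survives, and then use that the first occurrence index of $\det$ on $\Oo(3,1)(F)$ (computed via conservation) kills the remaining term, exactly paralleling the displayed vanishing $\Hom_{\Oo(3,1)(F)}((\pi\boxtimes\pi)^-,\mathbb{C})=0$ in the proof of Theorem~\ref{localmain}.

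\textbf{Part (ii): the multiplicity formula.} Assuming $\pi$ is generic square-integrable with $\pi|_{\Sp_4(E)}=\tau$ irreducible, the $L$-packet being a singleton means the component-group bookkeeping degenerates, so $|F(\phi_\tau)|$ simply counts the parameter lifts $\tilde\phi\colon WD_F\to\SO_5(\mathbb{C})$ restricting to $\phi_\tau$, with no character condition to impose. The plan is first to establish the see-saw identity
\[
\dim\Hom_{\Sp_4(F)}(\tau,\mathbb{C})=\dim\Hom_{\Oo(3,1)(F)}\bigl((\pi\boxtimes\pi)^+,\mathbb{C}\bigr),
\]
then to identify the right-hand side with a $\GSO$-distinction multiplicity that, as in the $\SL(2)$ case, equals $\dim\Hom_{\GL}(\pi^\sigma,\pi^\vee)$ or its $\Sp_4$-analogue, and finally to match this with the number of parameter lifts. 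The matching of the analytic multiplicity with $|F(\phi_\tau)|$ is the step requiring care: one must verify that each conjugate-self-dual lift $\tilde\phi$ contributes exactly one to the Hom-space, i.e.\ that the degree of the base-change map $\Hom(WD_F,\SO_5(\mathbb{C}))\to\Hom(WD_E,\SO_5(\mathbb{C}))$ at $\phi_\tau$ accounts precisely for the cokernel of $S_{\tilde\phi}\to S_{\phi_\tau}^{\Gal(E/F)}$, the $\SO_5$ analogue of the computation flagged in the Remark after Theorem~\ref{prasadsl(2)}.

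The step I expect to be the genuine obstacle is controlling the structure of $\mathfrak{I}(\tfrac12,\omega_{E/F})$ precisely enough to pass from the see-saw Hom-space to the counting of parameter lifts for $\SO_5$ rather than $\PGL_2=\SO_3$. For $\SL(2)$ the orthogonal side was low-dimensional and the double-coset geometry was completely explicit; for $\Sp_4$ the spinor/$\SO_5$ parameter combinatorics is substantially heavier, the $L$-packets can a priori be larger, and verifying that genericity plus square-integrability forces the packet to be a singleton (so that no multiplicity-one-versus-packet-size discrepancy arises) is the delicate point. I would isolate this by appealing to the explicit description of generic discrete $L$-parameters for $\GSp_4$ and the local Langlands correspondence of Gan--Takeda, reducing the singleton claim to the irreducibility hypothesis $\pi|_{\Sp_4(E)}=\tau$, which constrains the component group $S_{\phi_\tau}$ sharply.
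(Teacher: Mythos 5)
Your high-level instinct (see-saw identities plus the conservation relation, in the spirit of the $\SL(2)$ argument) is indeed the paper's philosophy, but in both parts your plan transplants the wrong see-saw, and the pieces you borrow from the $\SL(2)$ proof do not apply here. For part (i), the actual mechanism is a classification input you never invoke: a tempered non-generic $\pi$ of $\GSp_4(E)$ is the theta lift $\Theta(\Sigma)$ of a representation $\Sigma$ of $\GSO(V_{D_E})$, where $V_{D_E}$ is the \emph{anisotropic} $4$-dimensional quadratic space over $E$ (trivial discriminant, Hasse invariant $-1$). One then runs the see-saw on the pairs $(\Sp_4(E),\Oo(V_{D_E}))$ and $(\Sp_4(F),\Oo(Res_{E/F}V_{D_E}))$; since $Res_{E/F}V_{D_E}$ is $8$-dimensional over $F$ with trivial discriminant and Hasse invariant $-1$, the conservation relation forces the theta lift of $\mathbf{1}$ from $\Sp_4(F)$ to $\Oo(Res_{E/F}V_{D_E})$ to vanish, and the see-saw gives $\Hom_{\Sp_4(F)}(\tau,\mathbb{C})=0$ immediately. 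Your proposed route --- boundary/open-orbit analysis of $\mathfrak{I}(s,\omega_{E/F})$, the first occurrence of $\det$ on $\Oo(3,1)(F)$, and a ``collapse'' of $\Hom_{\Oo(3,1)(F)}((\pi\boxtimes\pi)^{\pm},\mathbb{C})$ --- is lifted from case (B)(ii) of the $\SL(2)$ proof, where $\pi$ was a representation of $\GL_2(E)$ and $\pi\boxtimes\pi$ its lift to $\GSO(2,2)(E)$; none of these objects exist in the present setting, and no step of your outline uses non-genericity in a way that could be completed.

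For part (ii) the gap is more serious. Your central identity $\dim\Hom_{\Sp_4(F)}(\tau,\mathbb{C})=\dim\Hom_{\Oo(3,1)(F)}((\pi\boxtimes\pi)^+,\mathbb{C})$ is not meaningful for $\pi$ on $\GSp_4(E)$: there is no ``$\pi\boxtimes\pi$'' attached to such a $\pi$, and the correct auxiliary objects are the similitude theta lifts $\Theta^{2,2}(\pi)$ to $\GSO(2,2)(E)$ and $\Theta^{3,3}(\pi)$ to $\GSO(3,3)(E)$, with distinction measured against orthogonal groups of $6$- and $4$-dimensional quadratic spaces over $F$, not $\Oo(3,1)(F)$. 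The paper's proof then turns on a dichotomy your proposal never sees: whether $\Theta^{2,2}(\pi)=0$. If $\Theta^{2,2}(\pi)=0$, then $\phi_\tau$ is irreducible, the multiplicity equals $\dim\Hom_{\SO(3,3)(F)}(\Theta^{3,3}(\pi),\mathbb{C})$, which is $1$ exactly when $\phi_\pi$ is conjugate-self-dual, and irreducibility of $\phi_\tau$ forces the extension $\tilde{\phi}$ to be unique, so $|F(\phi_\tau)|=1$. If $\Theta^{2,2}(\pi)\neq0$, then $\phi_\pi=\phi_1\oplus\phi_2$ and $\phi_\tau=\mathbf{1}\oplus(\phi_1^\vee\otimes\phi_2)$; here the multiplicity is pinned between an upper bound ($\SO(3,3)(F)$ plus $\SO(4,0)(F)$ contributions) and a lower bound (a sum over $\ker\{H^1(F,\Oo(4))\rightarrow H^1(E,\Oo(4))\}$), both equal to $2$, and correspondingly there are exactly \emph{two} parameter lifts, namely $i(\tilde{\phi}_1\oplus\tilde{\phi}_2)$ and $i(\tilde{\phi}_1\oplus\tilde{\phi}_2\omega_{E/F})$ in the conjugate-orthogonal case, or the two Asai lifts $\omega_{E/F}\oplus As^{\pm}(\phi_2)$ when $\phi_1^\vee=\phi_2^\sigma$. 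Your plan, which assumes a uniform ``each conjugate-self-dual lift contributes one'' matching modeled on the irreducible-parameter situation, has no mechanism to produce multiplicity $2$, so it cannot establish the stated equality in this case.
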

\begin{proof}
	\begin{enumerate}[(i)]
		\item If $\pi$ is tempered and non-generic, then $\pi=\Theta(\Sigma)$ where $\Sigma$ is an irreducible representation of $\GSO(V_{D_E})$, where $V_{D_E}$ is the nonsplit $4$-dimensional quadratic space over $E$ with trivial discriminant and Hasse invariant $-1$. Since  $Res_{E/F}V_{D_E}$ is an $8$-dimensional quadratic space over $F$ with trivial discriminant  and Hasse invariant $-1$, the conservation relation implies that the theta lift of the trivial representation from $\Sp_4(F)$ to $\Oo(Res_{E/F}V_{D_E})$ is zero.
		Due to the see-saw diagram
		\[\xymatrix{ \tau&\Sp_4(E)\ar@{-}[rd]&\Oo(Res_{E/F}(V_{D_E}))\ar@{-}[ld]&0\\\mathbb{C}&\Sp_4(F)&\Oo(V_{D_E})&\theta(\tau) } \] 
		one has the desired equality $ \Hom_{\Sp_4(F)}(\tau,\mathbb{C} )=0$.
		\item By the assumption, $\tau=\pi|_{\Sp_4(E)}$ is a square-integrable representation. Fix $\ell\in W_F\setminus W_E$.
		\begin{itemize}
			\item If the theta lift $\Theta^{2,2}(\pi)$ from $\GSp_4(E)$ to $\GSO(2,2)(E)$ is zero, then one can use a similar method appearing in the proof of \cite[Theorem 4.2.18(iii)]{lu2016new}
		to obtain the equality	\[\dim\Hom_{\Sp_4(F)}(\pi,\mathbb{C})=\dim\Hom_{\SO(3,3)(F)} (\Theta^{3,3}(\pi),\mathbb{C}), \]
		which is equal to the number
		\[\big|\{\chi:F^\times\rightarrow\mathbb{C}^\times\big| \Hom_{\GSO(3,3)(F)}(\Theta^{3,3}(\pi),\chi\circ\lambda)\neq0  \}\big|, \]
		where $\Theta^{3,3}(\pi)$ is the theta lift of $\pi$ from $\GSp_4(E)$ to $\GSO(3,3)(E)$ and $\lambda$ is the similitude character of the  group $\GSO(3,3)(F).$ Therefore, the dimension $\dim\Hom_{\Sp_4(F)}(\tau,\mathbb{C})=1$ if and only if the Langlands parameter $\phi_{\pi}$ of $\pi$ is conjugate-self-dual, i.e. $\phi_{\pi}^\vee=\phi_{\pi}^\sigma$.
		\par
		On the parameter side, $\phi_{\tau}:WD_E\rightarrow\PGSp_4(\mathbb{C})=\SO_5(\mathbb{C})$ is irreducible and $\phi_{\tau}\cong\phi_{\tau}^\vee\cong\phi_{\tau}^\sigma.$
		There exists a unique element $A\in\SO_5(\mathbb{C})$ such that
		\[\phi_{\tau}(\ell\cdot t\cdot\ell^{-1})=A\cdot\phi_{\tau}(t)\cdot A^{-1} \]
		for $t\in WD_E$. Set $\tilde{\phi}(\ell)=A$ and $\tilde{\phi}(t)=\phi_{\tau}(t)$ for $t\in WD_E$.
	Then $\tilde{\phi}$ is what we want.
		\item If $\Theta^{2,2}(\pi)\neq0,$ 
			 then $\phi_{\pi}=\phi_1\oplus\phi_2$ where $\phi_i:WD_E\rightarrow\GL_2(\mathbb{C})$ is irreducible and $\phi_1\neq\phi_2$. Moreover,  $\phi_\tau=\mathbf{1}\oplus(\phi_1^\vee\otimes\phi_2)$, see \cite[Page 3008]{gan2009sp(4)}.   Let $\Sigma
			 $ be the irreducible representation of $\GSO(2,2)(E)$ satisfying $\theta_\psi(\Sigma)=\pi$, then $\Sigma|_{\SO(2,2)(E)}$ is irreducible since $\pi|_{\Sp_4(E)}$ is irreducible.
			  Using a similar method appearing in \cite[Theorem 4.2.18(ii)]{lu2016new}, one can get that the dimension
			$\dim\Hom_{\Sp_4(F)}(\tau,\mathbb{C})$ has a upper bound 
		\begin{equation}\label{upper}
		\dim\Hom_{\SO(3,3)(F) }(\Theta^{3,3}(\pi),\mathbb{C} )+\dim\Hom_{\SO(4,0)(F)}(\Sigma,\mathbb{C}) 
		\end{equation} 
		and a lower bound
		\begin{equation}\label{lower}
		\sum_X\dim\Hom_{\SO(X,F) }(\Sigma,\mathbb{C} ) 
		\end{equation}
		where $X$ runs over all elements in the kernel $\ker \{H^1(F,\Oo(4))\rightarrow H^1(E,\Oo(4)) \}$. We will show that both the lower bound \eqref{lower} and the upper bound \eqref{upper} are equal to $2$ if $\pi|_{\Sp_4(E)}$ is an irreducible $\Sp_4(F)$-distinguished representation. Then
		\[\dim_\mathbb{C}\Hom_{\Sp_4(F)}(\tau,\mathbb{C}) =2. \]
		 There are two subcases.
		\begin{enumerate}[(a).]
			\item If $\phi_1^\vee=\phi_1^\sigma$, then 
			$\phi_1^\vee\neq\phi_2^\sigma$, otherwise $\phi_1=\phi_2$,
			which contradicts $\phi_1\neq\phi_2$. Since $\phi_1$ is irreducible, the Langlands parameter $\phi_1$ is either conjugate-orthogonal or conjugate-symplectic, but can not be both.  Note that there is an equality
			\[\dim\Hom_{\SO(3,3)(F)}(\Theta^{3,3}(\pi),\mathbb{C})=\big|\{\chi:F^\times\rightarrow\mathbb{C}^\times\big|\Hom_{\GSO(3,3)(F)}(\Theta^{3,3}(\pi),\chi\circ\lambda)\neq0 \} \big|.  \]
		We have a	similar result for both $\dim\Hom_{\SO(4,0)(F) }(\Sigma,\mathbb{C} )$ and $\dim\Hom_{\SO(2,2)(F)} (\Sigma,\mathbb{C} )$.
		If $\phi_2^\vee=\phi_2^\sigma$ is conjugate-self-dual  with the same sign with $\phi_1$, then
		\[\dim\Hom_{\Sp_4(F)}(\tau,\mathbb{C})=2 .\]
		Otherwise, $\tau$ is not $\Sp_4(F)$-distinguished.
		\par
		On the parameter side, $\frac{1}{\det\phi_1}=(\det\phi_1)^\sigma$. Without loss of generality, suppose that $\phi_1$ is conjugate-orthogonal, i.e. $\det\phi_1=\frac{\nu^\sigma}{\nu}=\det\phi_2$, then  $\nu\otimes\phi_j$ is $\Gal(E/F)$-invariant. For each $j$, there exists a parameter $\tilde{\phi}_j:WD_F\rightarrow \GL_2(\mathbb{C})$ such that
		$\tilde{\phi}_j|_{WD_E}=\phi_j\otimes\nu$. Set $\rho_1=\tilde{\phi}_1\oplus\tilde{\phi}_2$ and $\rho_2=\tilde{\phi}_1\oplus\tilde{\phi}_2\omega_{E/F}$. Let $i:\GSp_4(\mathbb{C})\rightarrow\SO_5(\mathbb{C})$ be the natural projection map.
		 Then the parameters
		$i(\rho_1)$ and $i(\rho_2)$ are what we want
			\item If $\phi_1^\vee=\phi_2^\sigma$, then
			$\dim\Hom_{\Sp_4(F)}(\tau,\mathbb{C})=2$ since that the upper bound \eqref{upper} is $2$ and that the lower bound \eqref{lower} is at least $2$. On the parameter side, $\phi_{\tau}=\mathbf{1}\oplus(\phi_2^\sigma\otimes\phi_2 )$ is $\Gal(E/F)$-invariant.
			There exist two natural parameters $\tilde{\phi}_j:WD_F\rightarrow\GL_5(\mathbb{C})$ such that
			$\tilde{\phi}_j|_{WD_E}=\phi_{\tau}$, which are $\omega_{E/F}\oplus As^+(\phi_2)$ and $\omega_{E/F}\oplus As^-(\phi_2)$, where $As^{\pm}(\phi_2)$ are the Asai lifts of $\phi_2$, see \cite[\S7]{gan2011symplectic}. Then the images of $\tilde{\phi}_j$ lie in $\SO_5(\mathbb{C})$.
			
		\end{enumerate}
		\end{itemize}
	\end{enumerate}
Therefore, we have finished the proof.
\end{proof}
\begin{rem}
	If $\tau=\pi|_{\Sp_4(E)}$ is irreducible, one can also use the method appearing in \cite{anandavardhanan2003distinguished} directly to get that the dimension $\dim\Hom_{\Sp_4(F)}(\tau,\mathbb{C})$ equals to the  sum
	\begin{equation}\label{sum} \sum_{\chi:F^\times/(F^\times)^2\rightarrow\mathbb{C}^\times}\dim\Hom_{\GSp_4(F)}(\pi,\chi)。    \end{equation}
	Combining with the results in \cite[Theorem 4.2.18]{lu2016new}, we can obtain  $\dim\Hom_{\Sp_4(F)}(\tau,\mathbb{C})$ if $\pi$ is tempered.
\end{rem}
\begin{rem}  Let $U_2(D)$ be the unique inner form of $\Sp_4(F)$ defined over $F$.
	Suppose that $\pi$ is a generic representation of $\GSp_4(E)$.
	Thanks to \cite[Theorem 1]{beuzart2017distinguished}, if $\pi|_{\Sp_4(E)}=\tau$ is an irreducible square-integrable representation of $\Sp_4(E)$ and $\Theta^{2,2}(\pi)$ is $0$, then
	\[\dim\Hom_{U_2(D)}(\tau,\mathbb{C}) =1 .\]
\end{rem}
\bibliographystyle{amsalpha}
\bibliography{SL(2)}
\end{document}